\numberwithin{equation}{section}
\newtheorem{Theorem}{Theorem}[section]
\newtheorem{Corollary}[Theorem]{Corollary}
\newtheorem{Proposition}[Theorem]{Proposition}
 { \theoremstyle{definition}
\newtheorem{Definition}[Theorem]{Definition}

\newtheorem{Remark}[Theorem]{Remark} }
\newcommand{\ay}{{\rm i}}
\newcommand{\Epsilon}{\varepsilon}
\begin{document}

\newcommand{\arXivNumber}{1805.04646}

\renewcommand{\thefootnote}{}

\renewcommand{\PaperNumber}{118}

\FirstPageHeading

\ShortArticleName{Integral Regulators for Higher Chow Complexes}

\ArticleName{Integral Regulators for Higher Chow Complexes\footnote{This paper is a~contribution to the Special Issue on Modular Forms and String Theory in honor of Noriko Yui. The full collection is available at \href{http://www.emis.de/journals/SIGMA/modular-forms.html}{http://www.emis.de/journals/SIGMA/modular-forms.html}}}

\Author{Muxi LI}

\AuthorNameForHeading{M.~Li}

\Address{University of Science and Technology, Hefei, Anhui, P.R.~China}
\Email{\href{mailto:limuxi@ustc.edu.cn}{limuxi@ustc.edu.cn}}

\ArticleDates{Received May 12, 2018, in final form October 31, 2018; Published online November 03, 2018}

\Abstract{Building on Kerr, Lewis and M\"uller-Stach's work on the rational regulator, we prove the existence of an integral regulator on higher Chow complexes and give an explicit expression. This puts firm ground under some earlier results and speculations on the torsion in higher cycle groups by Kerr--Lewis--M\"uller-Stach, Petras, and Kerr--Yang.}

\Keywords{integral regulator; higher Chow groups; algebraic cycles; Abel--Jacobi map}

\Classification{14C15; 14C25; 19F27}

\renewcommand{\thefootnote}{\arabic{footnote}}
\setcounter{footnote}{0}

\section{Introduction}\label{S1}

Higher Chow groups were introduced by S.~Bloch in the mid-80's as a geometric representation of algebraic $K$-theory \cite{Bl0}. For $X$ a smooth quasi-projective variety over an infinite field $k$, Bloch's Grothendieck--Riemann--Roch theorem identifies them \emph{rationally} with certain graded pieces of $K$-theory:
\begin{gather}\label{bgrr}
{\rm CH}^p(X,n)\otimes\mathbb{Q}\simeq {\rm Gr}^P_{\gamma}K^{\rm alg}_nX\otimes\mathbb{Q}.
\end{gather}
As Bloch showed, these groups come with natural Chern class maps
\begin{gather}\label{zreg}
{{\rm AJ}}_{\mathbb{Z}}^{p,n}\colon \ {\rm CH}^p(X,n)\to H_{\mathscr{D}}^{2p-n}(X,\mathbb{Z}(p))
\end{gather}
to the cohomology of the underlying variety \cite{Bl0a}, which ``interpolate'' Griffiths's Abel--Jacobi maps on Chow groups (i.e.,~$K_0$) and Borel's regulators on the higher $K$-theory of number fields.

While abstractly defined, these maps were successfully computed in many specific cases by Bloch, Beilinson, Deninger, and others. However, an explicit general formula only emerged in the work of Kerr, Lewis and M\"{u}ller-Stach \cite{KLM,KL}. By introducing a~subcomplex $Z^p_{\mathbb{R}}(X,\bullet)\overset{\imath}{\hookrightarrow} Z^p(X,\bullet)$ of cycles in good position with respect to the ``wavefront'' set of certain currents on $\big(\mathbb{P}^1\big)^n$, they are able to construct a map of complexes
\begin{gather} \label{klmreg}
\widetilde{{\rm AJ}}\colon \ Z^p_{\mathbb{R}}(X,\bullet)\rightarrow C^{2p-\bullet}_{\mathscr{D}}(X,\mathbb{Z}(p))
\end{gather}
agreeing \emph{rationally} with \eqref{zreg}. While the explicit formula will be recalled in Section~\ref{S4}, we remark that $\mathcal{C}^m_{\mathcal{D}}(X,\mathbb{Z}(p))$ consists of triples $(T,\Omega,R)$ where $T\in \mathcal{C}^m(X,(2\pi\ay)^p\mathbb{Z})$ is a smooth chain of real codimension $m$, and $\Omega\in F^p \mathcal{D}^m(X)$, $R\in\mathcal{D}^{m-1}(X)$ are currents. The map~\eqref{klmreg} sends higher Chow cycles $Z$ to triples $(T_Z,\Omega_Z,R_Z)$, and the relations that make~\eqref{klmreg} a morphism of complexes are $\partial T_Z=T_{\partial Z}$, $\mathrm{d}[\Omega_Z]=\Omega_{\partial Z}$, and $\mathrm{d}[R_Z]=R_{\partial Z}+\Omega_Z-\delta_{T_Z}$ (where $\delta_{T_Z}$ is the current of integration over~$T_Z$).

At first glance, the ``KLM formula''~\eqref{klmreg} looks well-adapted to detecting torsion. For example, if $X=\operatorname{Spec}(k)$ is a point, then a portion of \eqref{klmreg} takes the form
\begin{gather*}
 \xymatrix{\cdots \ar [r] & Z^p_{\mathbb{R}}(k,2p) \ar [r]^{\partial} \ar [d]_{W\mapsto}^{(2\pi\ay)^p W\cap T_{2p}} & Z^p_{\mathbb{R}}(k,2p-1) \ar [r]^{\partial}\ar[d]_{Z\mapsto}^{\frac{1}{(2\pi\ay)^{p-1}}\int_Z R_{2p-1}} & Z^p_{\mathbb{R}}(k,2p-2) \ar [r] \ar[d]^0 & \cdots \\
\cdots \ar [r] & \mathbb{Z}(p) \oplus 0 \oplus 0 \ar @{^(->} [r] & 0 \oplus 0 \oplus \mathbb{C} \ar [r] & 0\oplus 0\oplus 0 \ar [r] & \cdots,}
\end{gather*}
where $T_{2p}=\mathbb{R}_{<0}^{2p} \subset \big(\mathbb{P}^1\big)^{2p}$ and $R_{2p-1}$ is a certain $(2p-2)$-current on $\big(\mathbb{P}^1\big)^{2p-1}$. We want to detect torsion in ${\rm CH}^p(k,2p-1)$ by the middle map; denote the image of $Z\in\ker(\partial)$ by $\mathscr{R}(Z)\in \mathbb{C}/\mathbb{Z}(p)$. In particular, if $Z_1:=\big(1-1/t,1-t,t^{-1}\big)_{t\in \mathbb{P}^1}\in Z^2(\mathbb{Q},3)$, we find that $\mathscr{R}(Z_1)=\pi^2/6\in \mathbb{C}/(2\pi\ay)^2 \mathbb{Z}$, in agreement with the known result that ${\rm CH}^2(\mathbb{Q},3)$ is 24-torsion (see \cite{Pe}).

Unfortunately, it appears very difficult to determine whether $\imath$ is an integral quasi-isomor\-phism, as expected in~\cite{KLM}. Indeed, the proof in~\cite{KL} that this inclusion of complexes is a $\mathbb{Q}$-quasi-isomorphism makes essential use of Kleiman transversality in $K$-theory and hence of some form of~\eqref{bgrr}. So the KLM formula only induces a ``rational regulator''
\begin{gather}\label{klmaj}
{{\rm AJ}}_{\mathbb{Q}}^{p,n}\colon \ {\rm CH}^p(X,n)\to H_{\mathscr{D}}^{2p-n}(X,\mathbb{Q}(p)).
\end{gather}
It is easy to see the problem: we could have that the class of $Z$ in $H_{2p-1}\big(Z^p_{\mathbb{R}}(k,2p-1)\big)$ \emph{and} its $\widetilde{{\rm AJ}}$-image are $m$-torsion (but nonzero), whilst $Z$ is a boundary in the larger complex (hence zero in ${\rm CH}^p(k,2p-1)$). That is, there would be some $\mathcal{W}\in Z^p(k,2p)\setminus Z^p_{\mathbb{R}}(k,2p)$ with $\partial \mathcal{W}=Z$, but only $mZ \in \partial\big( Z^p_{\mathbb{R}}(k,2p)\big)$. Moreover, even if we could improve the result on $\imath$ (and eliminate this particular worry), it would remain inconvenient to find representative cycles in $Z^p_{\mathbb{R}}(X,n)$.

An alternative is to extend KLM to a formula that works on all cycles. Doing this with one map of complexes on $Z^p(X,\bullet)$ is probably too optimistic, as one can't just wish away the ``wavefront sets'' arising from the branch cuts in the $\{\log(z_i)\}$. Our first idea was to try an infinite family of homotopic maps on nested subcomplexes $Z^p_{\varepsilon}(X,\bullet)$ with union $Z^p(X,\bullet)$, by allowing cycles in good position with respect to ``perturbations'' of these branch cuts by sufficiently small nonzero ``phase''~$e^{\ay\epsilon}$, $0<\epsilon<\varepsilon$. Provided one tunes the branches of log in the regulator currents accordingly, and the same $\epsilon$ is used for each $z_i$, one gets a morphism of complexes on the $\varepsilon$-subcomplexes. Since the homotopy class of this morphism is independent of~$\epsilon$, this approach would define an integral refinement of $\widetilde{{\rm AJ}}$ provided the $\varepsilon\to 0$ limit of the ``perturbed'' subcomplexes gives all of~$Z^p(X,\bullet)$. Unfortunately, this is not true: there is a counterexample involving triples of functions on a curve, see Section~\ref{simp}. So a more subtle approach is required.

In particular, we need a way to vary phases $\epsilon_i$ independently for the branches of $\log(z_i)$, so as to place weaker demands on our cycles. But this can never lead to a morphism of complexes from $Z^p(X,\bullet)$, since this independence would conflict with the way the Bloch differential $\partial$ intersects cycles with all the facets. On the other hand, one has an explicit $\mathbb{Z}$-homotopy equivalence for the inclusion $\mathscr{N}^p(X,\bullet) \subset Z^p(X,\bullet)$ of the normalized cycles, on which the differential restricts to just one facet \cite{Bl1}. In $\mathscr{N}^p(X,\bullet)$, we now consider the ``$\Epsilon$-subcomplex'' $\mathscr{N}^p_{\Epsilon}(X,\bullet)$, consisting of cycles which are in good position with respect to the $\big(e^{\ay\epsilon_1},\ldots,e^{\ay\epsilon_n}\big)$-perturbed wavefront set for any $(\epsilon_1,\ldots,\epsilon_n)$ belonging to $B^n_\Epsilon:=\big\{ \underline{\epsilon}\in \mathbb{R}^n \,|\, 0<\epsilon_1<\Epsilon,0<\epsilon_2<e^{-1/\epsilon_1},\ldots,0<\epsilon_n<e^{-1/\epsilon_{n-1}}\big\}$.

Our main technical results are
\begin{Theorem}$\bigcup_{\varepsilon>0}\mathscr{N}^p_{\varepsilon}(X,\bullet)=\mathscr{N}^p(X,\bullet)$.
\end{Theorem}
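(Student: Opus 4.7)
The plan is to proceed by induction on $n$ using a \L{}ojasiewicz-style estimate tailored to the super-exponential shape of $B^n_\Epsilon$. Fix a normalized cycle $Z\in\mathscr{N}^p(X,n)$. For each subset $I\subset\{1,\ldots,n\}$, the perturbed wavefront stratum $T_I^{\underline{\epsilon}}:=\prod_{i\in I}\bigl(e^{\ay\epsilon_i}\mathbb{R}_{<0}\bigr)\times\prod_{i\notin I}\mathbb{P}^1$ depends real-analytically on $\underline{\epsilon}_I:=(\epsilon_i)_{i\in I}$, and good position requires that $Z$, together with its (normalization-trivial) face-restrictions, meet each $X\times T_I^{\underline{\epsilon}}$ in the expected real codimension $|I|$.

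First I would identify the ``bad locus'' $\mathcal{B}_Z^I\subset\mathbb{R}^I_{>0}$ of parameters at which good position fails. Since the family $\underline{\epsilon}_I\mapsto Z\cap(X\times T_I^{\underline{\epsilon}})$ is real-analytic and $Z$ is complex-algebraic, $\mathcal{B}_Z^I$ is a semi-analytic subset of $\mathbb{R}^I_{>0}$, cut out by the vanishing of analytic resultants/minors recording dimensional jumps. Its properness follows from a sweep/genericity argument: if the intersection had excess dimension for every $\underline{\epsilon}_I$, then either the excess locus is constant in $\underline{\epsilon}_I$ (forcing $Z$ to contain a subset of $\bigcap_{\underline{\epsilon}_I}T_I^{\underline{\epsilon}}=\emptyset$), or it genuinely moves (so sweeping yields a subset of $Z$ of real dimension strictly exceeding $2\dim_\mathbb{C} Z$), either way a contradiction. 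Normalization of $Z$ kills the potential boundary contributions at $z_i\in\{0,\infty\}$ bounding the arcs.

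The heart of the argument is to show that $B^n_\Epsilon$ is disjoint from every proper semi-analytic germ at the origin for $\Epsilon$ sufficiently small. I would argue by induction on $n$. Given a real-analytic germ $F(\underline{\epsilon})$ with $F(0)=0$, either $F$ is divisible by $\epsilon_n$, in which case one recurses on $F/\epsilon_n$ (using $\epsilon_n>0$ throughout $B^n_\Epsilon$), or the restriction $\tilde F(\underline{\epsilon}'):=F(\underline{\epsilon}',0)$ is not identically zero. In the latter case, applying the inductive hypothesis to $\tilde F$ and invoking \L{}ojasiewicz yields $|\tilde F(\underline{\epsilon}')|\geq c\,|\underline{\epsilon}'|^N$ on $B^{n-1}_{\Epsilon'}$; the correction $F-\tilde F = O(\epsilon_n) = O\bigl(e^{-1/\epsilon_{n-1}}\bigr)$ is super-polynomially small in $\underline{\epsilon}'$, and therefore cannot cancel the dominant Puiseux term. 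Hence $F\neq 0$ throughout $B^n_\Epsilon$ for $\Epsilon$ small. The shape of $B^n_\Epsilon$ is precisely engineered so that each new variable $\epsilon_{i+1}$ decays in $\epsilon_i$ faster than any Puiseux branch of an analytic germ could.

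To finish, apply this avoidance principle to each of the finitely many analytic equations defining $\bigcup_I\mathcal{B}_Z^I$ and take the minimum of the resulting $\Epsilon$'s. The main obstacle is the inductive \L{}ojasiewicz estimate: one must control how the constants $c,N$ behave as mixed monomials appear in $F$ and ensure that the polynomial lower bound on $\tilde F$ truly dominates the exponentially small error $\epsilon_n$ uniformly on $B^{n-1}_{\Epsilon'}$. Everything else -- the semi-analyticity of $\mathcal{B}_Z^I$, the use of normalization to eliminate boundary contributions, and the final assembly -- is routine once this core estimate is in place.
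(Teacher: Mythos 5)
Your overall strategy matches the paper's: isolate a ``bad locus'' of perturbation parameters where proper intersection fails, show it is a proper real-analytic subset by a dimension count, and then argue that the super-exponentially thin region $B^n_\Epsilon$ can be shrunk to avoid it. Your sweep/genericity argument for properness is essentially the dimension count the paper makes.

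That said, two points. First, the paper packages the bad locus more cleanly than ``analytic resultants/minors.'' It introduces the real algebraic ``slope'' map $\Theta^n\colon (\mathbb{C}^*)^n\to\big(\mathbb{P}^1_{\mathbb{R}}\big)^n$, $x_i+\ay y_i\mapsto y_i/x_i$, whose fibers are exactly $\bigcap_i T^{\epsilon_i}_{z_i}$, and identifies the bad locus with the non-flat locus of $\Theta_Z$ on a resolution of $Z^*$. That set is Zariski-closed by generic flatness, hence a \emph{real algebraic} subvariety of $\big(\mathbb{P}^1_{\mathbb{R}}\big)^n$, not merely semi-analytic; this makes both properness and the later avoidance step transparent. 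Incidentally, your remark that normalization ``kills the boundary contributions'' is a red herring here: the definition of $\mathscr{N}^p_{\Epsilon}$ still requires proper intersection with $T^{\epsilon_1}_{z_1}\cap\cdots\cap T^{\epsilon_i}_{z_i}\cap\partial^k\Box^n$ for all $k$, and the paper deals with this simply by re-running the same flatness argument on each subface stratum and taking the minimum of the resulting $\Epsilon$'s; normalization plays no role in this moving lemma.

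Second, and this is the genuine gap: your inductive Lojasiewicz step does not close. The bound $|\tilde F(\underline{\epsilon}')|\geq c\,|\underline{\epsilon}'|^N$ on $B^{n-1}_{\Epsilon'}$ is not a consequence of the inductive hypothesis that $\tilde F$ is merely nonvanishing there. The classical Lojasiewicz inequality lower-bounds $|\tilde F|$ by a power of the \emph{distance to the zero locus} of $\tilde F$, not by a power of $|\underline{\epsilon}'|$. Since $B^{n-1}_{\Epsilon'}$ is open, noncompact at the origin, and its boundary may approach $\{\tilde F=0\}$ tangentially at super-exponential rates, one bound does not convert into the other, and you have no control on the constants as $\underline{\epsilon}'\to 0$. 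You flag this as ``the main obstacle,'' but that concession is exactly the content of the theorem. The paper resolves it by invoking Lion--Rolin's preparation theorem for logarithmico-exponential functions (their Theorem~1), which produces a cell decomposition simultaneously compatible with the (sub)analytic bad set $\tilde\Delta$ and the log-exp region $B^n_\Epsilon$, together with the observation that all derivatives of $e^{-1/x}$ vanish at the origin; this o-minimal input is what rigorously replaces your hand-rolled dominance argument and cannot be dispensed with.
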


\begin{Theorem}Given $\underline{\epsilon}$, $\underline{\epsilon}'\in B^N_{\epsilon}$, the corresponding morphisms
\begin{gather*}
R_{\underline{\epsilon}},\,R_{\underline{\epsilon}'}\colon \ \tau_{\leq N}\mathscr{N}_{\Epsilon}^p(X,\bullet)\to C_{\mathscr{D}}^{2p-\bullet}(X,\mathbb{Z}(p))
\end{gather*}
induced by the perturbed KLM currents, are integrally homotopic in degrees~$\bullet<N$.
\end{Theorem}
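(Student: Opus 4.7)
The plan is to construct an explicit chain homotopy $H_\gamma$ associated to a path $\gamma\colon[0,1]\to B^N_\Epsilon$ from $\underline{\epsilon}'$ to $\underline{\epsilon}$, obtained by integrating the one-parameter family of perturbed KLM currents $R_{\gamma(t)}$ along $t$. First I would verify that such paths exist: $B^N_\Epsilon$ is path-connected via a zigzag construction (shrink $\epsilon_N$ below $\min\big\{e^{-1/\epsilon_{N-1}},e^{-1/\epsilon'_{N-1}}\big\}$, slide $\epsilon_{N-1}$ to its target value, and iterate downwards through the remaining coordinates), giving a smooth path lying in~$B^N_\Epsilon$.

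For $Z\in\mathscr{N}^p_\Epsilon(X,n)$ with $n<N$, set $H_\gamma(Z)=\big(T^\gamma_Z,\,0,\,R^\gamma_Z\big)$, where $T^\gamma_Z$ is the $(2p-n)$-chain obtained as the $t$-integral of the family $T^{\gamma(t)}_Z$ of perturbed KLM chains, and $R^\gamma_Z$ is the analogous family integral of the perturbed KLM current. The $\Omega$-component of $H_\gamma$ vanishes since the holomorphic form $\Omega_Z$ is branch-independent and does not vary with $\underline{\epsilon}$. Because $Z\in\mathscr{N}^p_\Epsilon$ by hypothesis, good position holds uniformly for all $\gamma(t)\in B^N_\Epsilon$, so the family is continuous in $t$ and the swept chains/currents are well-defined on~$X$. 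The chain homotopy identity
\begin{gather*}
R_{\underline{\epsilon}}(Z)-R_{\underline{\epsilon}'}(Z)=d\,H_\gamma(Z)+H_\gamma(\partial Z)
\end{gather*}
then follows from Stokes' theorem applied to $[0,1]\times Z$, whose boundary $\{1\}\times Z-\{0\}\times Z-[0,1]\times\partial Z$ produces exactly these three terms in each slot of the Deligne triple. Integrality of the $T$-component is immediate, since each $T^{\gamma(t)}_Z$ is a $(2\pi\ay)^p\mathbb{Z}$-chain whose integer multiplicities do not change along $t$, so the swept chain $T^\gamma_Z$ inherits the integral structure.

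The main obstacle is controlling the family integral near configurations where two or more perturbed branch cuts of the $\log(z_i)$ would approach one another along $\gamma$, causing the perturbed KLM currents to develop spurious singular contributions. The nested exponential bounds defining $B^N_\Epsilon$, namely $\epsilon_i<e^{-1/\epsilon_{i-1}}$, enforce a hierarchical separation between consecutive branch cuts that persists throughout the homotopy, so no unexpected boundary terms arise when Stokes is invoked and $H_\gamma$ lands integrally where it should. The same mechanism dictates the truncation $\tau_{\leq N}$: in the top degree $N$ one would need to maintain separation of an additional phase beyond the $N$ parameters available in $B^N_\Epsilon$, so the homotopy relation can only be established in degrees $\bullet<N$, exactly as stated.
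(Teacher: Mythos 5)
Your proposal takes a genuinely different route from the paper, and it contains a real gap. The paper never integrates a one-parameter family of currents along a path. Instead it works entirely combinatorially in a double complex $E^{a,b}=\bigoplus_{I,f}C^{2a+b}_{\mathscr{D}}(Y_{I,f})$ built from the faces of $\big(\mathbb{P}^1\big)^N$: the single-variable homotopy is the elementary Deligne cochain $\mathcal{S}^{\epsilon,\epsilon'}_{z_i}=\big({-}2\pi\ay\,\theta^{\epsilon,\epsilon'}_{z_i},0,0\big)$ (with $\theta$ the angular-sector $2$-chain swept out by the branch cut) satisfying the purely algebraic identity $D\mathcal{S}^{\epsilon,\epsilon'}_{z_i}=\mathcal{R}^{\epsilon}_{z_i}-\mathcal{R}^{\epsilon'}_{z_i}$; the full homotopy is the telescoping cup-product sum
\begin{gather*}
\mathcal{S}^{\underline{\epsilon},\underline{\epsilon'}}_n=\sum_{k=1}^n(-1)^{k-1}\mathcal{R}^{\epsilon_1}_{z_1}\cup\cdots\cup\mathcal{R}^{\epsilon_{k-1}}_{z_{k-1}}\cup\mathcal{S}^{\epsilon_k,\epsilon'_k}_{z_k}\cup\mathcal{R}^{\epsilon'_{k+1}}_{z_{k+1}}\cup\cdots\cup\mathcal{R}^{\epsilon'_n}_{z_n},
\end{gather*}
and the proof is a Leibniz computation plus telescoping, with no appeal to continuity in a parameter. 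In effect the paper replaces your smooth path $\gamma$ by a specific zigzag (change $\epsilon_n$ first, then $\epsilon_{n-1}$, and so on), and each ``leg'' of the zigzag produces a smooth chain, not an integral of a family of currents.

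The gap in your approach is in the assertion that $R^\gamma_Z$, the ``$t$-integral'' (i.e.\ pushforward from $[0,1]\times X$) of the family of perturbed KLM currents, is well-defined, and that Stokes' theorem applies to $[0,1]\times Z$ without extra boundary contributions. Recall that $R^{\underline{\epsilon}}_n$ is built from terms $\delta_{T^{\epsilon_1}_{z_1}\cap\cdots\cap T^{\epsilon_{k-1}}_{z_{k-1}}}\log^{\epsilon_k}(z_k)\frac{{\rm d}z_{k+1}}{z_{k+1}}\wedge\cdots$; these currents have singular support that moves with $t$, and the wavefront set of a $\delta_{T}$-type term can be non-transverse to $\{t\}\times X$ for a positive-measure set of $t$ as the branch cuts collide with each other or with $Z$. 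Saying ``the nested exponential bounds enforce a hierarchical separation that persists throughout the homotopy'' is exactly what needs to be proved, and it is not automatic for an arbitrary path in $B^N_\Epsilon$; your zigzag construction, in particular, temporarily drives $\epsilon_N$ close to $0$, where all analytic control becomes delicate. The paper's telescoping avoids the issue completely: each cup product factor $\mathcal{S}^{\epsilon_k,\epsilon'_k}_{z_k}$ is a genuine geometric chain on a single $\mathbb{P}^1$ factor, the Leibniz rule and the definition of the Deligne cup product do the rest, and the only geometric input is the identity $D\mathcal{S}^{\epsilon,\epsilon'}_{z_i}=\mathcal{R}^{\epsilon}_{z_i}-\mathcal{R}^{\epsilon'}_{z_i}$ for one variable. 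Your heuristic for the $\tau_{\leq N}$ truncation (``needing one more phase'') is also a guess that does not match anything in the paper's argument; there the constraint simply reflects that the normalization property (boundary concentrated on one facet) is what lets the $n$-tuple $\underline{\epsilon}$ propagate consistently along $\partial$, and the truncation index matches the length of the available multi-phase. If you want to make the cylinder-Stokes approach rigorous, you would need to establish the integrability of the family of currents and track wavefront sets through the pushforward; the combinatorial argument sidesteps all of that.
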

(Here $\tau_{\leq N}$ truncates the complex above the $N^{\rm th}$ term.) These results are proved in Sections~\ref{S3} and~\ref{htpy}, respectively. It is now easy to deduce that, taken over all $\underline{\epsilon}$, these morphisms induce a map of the form~\eqref{zreg} refining~\eqref{klmaj}, see Section~\ref{sec-zaj}. We conclude by indicating several applications of the KLM formula to torsion in Section~\ref{sec-tors} due to~\cite{KLM}, Petras~\cite{Pe}, Kerr--Yang~\cite{MY} which are now validated by our construction, and indicate future work in this direction.

\begin{Remark}In this article, we are working with analytic Deligne cohomology, which is not the optimal generalization to quasi-projective varieties. It's more work to define the map to absolute Hodge cohomology.
\end{Remark}

\section{Higher Chow cycles}\label{S2}

\subsection{Basic definitions}

Definitions in this section follow \cite{KLM}.
Let $X$ be a smooth quasiprojective algebraic variety over an infinite field $k$. An algebraic cycle on $X$ is a finite linear combination $\Sigma n_V[V]$ of subvarieties $V \subset X$, where $n_V \in \mathbb{Z}$.

We define the algebraic $n$-cube (over $k$) by $\Box^n:=\big(\mathbb{P}^1\backslash\{1\}\big)^n$ with face inclusions $\rho^{\mathfrak{f}}_i\colon \Box^{n-1}\rightarrow\Box^n$ ($\mathfrak{f}\in\{0,\infty\}$) sending $(z_1,\ldots,z_{n-1})$ to $(z_1,\ldots,z_{i-1},\mathfrak{f},z_i,\ldots,z_{n-1})$, and coordinate projections $\pi_i\colon \Box^{n}\rightarrow\Box^{n-1}$ sending $(z_1,\ldots,z_n)$ to $(z_1,\ldots,\hat{z_i},\ldots,z_n)$. We call
\begin{gather*}
\partial\Box^n:=\bigcup_{\substack{i=1,\dots,n\\ \mathfrak{f}=0,\infty}}\big(\rho^{\mathfrak{f}}_i\big)_*\Box^{n-1}
\end{gather*}
the facets of $\Box^n$, and
\begin{gather*}
\partial^k\Box^n:=\bigcup_{\substack{i_1<\cdots<i_k\\ \mathfrak{f}_1,\dots,\mathfrak{f}_k=0,\infty}}\big(\rho^{\mathfrak{f}_1}_{i_1}\big)_*\cdots\big(\rho^{\mathfrak{f}_k}_{i_k}\big)_*\Box^{n-k}
\end{gather*}
the codimension-$k$ subfaces of $\Box^n$.

\begin{Definition}$c^p(X,n)\subset Z^p(X\times\Box^n)$ is the free abelian group on irreducible subvarieties $V\subset X\times\Box^n$ of codimension $p$ such that $V$ meets all faces of $X\times\Box^n$ properly.
\end{Definition}

\begin{Definition}The \emph{degenerate cycles} $d^p(X,n)\subset c^p(X,n)$ are defined as $\sum\limits_{i=1}^n \pi_i^*(c^p(X,n-1))$. Set $Z^p(X,n):=c^p(X,n)/d^p(X,n)$.
\end{Definition}

The Bloch differential
\begin{gather*}
\partial_B:=\sum_{i=1}^n (-1)^{i}\big(\rho_i^{0 *}-\rho_i^{\infty *}\big)\colon \ Z^p(X,n)\rightarrow Z^p(X,n-1)
\end{gather*} makes $Z^p(X,\bullet)$ into a complex, with the higher Chow groups ${\rm CH}^p(X,n)$ given by their homology. For convenience, we shall often use cohomological indexing:
\begin{Definition}${\rm CH}^p(X,n):=H^{-n}\{Z^p(X,-\bullet)\}$.
\end{Definition}

\subsection{A moving lemma} We recall the subcomplex from \cite{KLM}. Henceforth we shall take $k$ to be a subfield of $\mathbb{C}$, so we can consider the complex analytic spaces associated to components of a cycle $Z$. Write $T_{z_i}$ for the (codimention 1) geometric chain $z_i^{-1}(\mathbb{R}_{<0})$, oriented so that $\partial T_{z_i}=(z_i)=z_i^{-1}(0)-z_i^{-1}(\infty)$. Let $c^p_{\mathbb{R}}(X,n)$ be the set of all the cycles $Z\in c^p(X,n)$ whose components (or rather, their analytizations) intersect $X\times(T_{z_1}\cap\cdots\cap T_{z_i})$ and $X\times\big(T_{z_1}\cap\cdots\cap T_{z_i}\cap\partial^k\Box^n\big)$ properly for all $1\leq i\leq n$ and $1\leq k<n$, and $d^p_{\mathbb{R}}(X,n):=c^p_{\mathbb{R}}(X,n)\cap d^p(X,n)$. We get a new complex $Z^p_{\mathbb{R}}(X,n):=c^p_{\mathbb{R}}(X,n)/d^p_{\mathbb{R}}(X,n)$. It is shown in~\cite{KL} that this subcomplex is $\mathbb{Q}$-quasi-isomorphic to the original one:

\begin{Theorem}[Kerr--Lewis]$Z^p_{\mathbb{R}}(X,\bullet)\xrightarrow{\simeq} Z^p(X,\bullet)$.
\end{Theorem}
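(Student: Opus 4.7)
The plan is to prove that $\imath\colon Z^p_{\mathbb{R}}(X,\bullet)\hookrightarrow Z^p(X,\bullet)$ is a quasi-isomorphism by a moving-lemma construction: given $Z\in Z^p(X,n)$ with $\partial_B Z\in Z^p_{\mathbb{R}}(X,n-1)$, produce an explicit $W\in Z^p(X,n+1)$ with $Z+\partial_B W\in Z^p_{\mathbb{R}}(X,n)$, and assemble these moves into a chain homotopy inverse of $\imath$ \emph{without denominators}.

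First I would introduce the real torus $(S^1)^n$ acting on $X\times(\mathbb{P}^1)^n$ by coordinatewise rotations $z_i\mapsto e^{\ay\theta_i}z_i$. This action fixes $\{0,\infty\}$ on each factor and hence preserves the face structure of admissible cycles. Rotation by $e^{\ay\theta_i}$ carries the wavefront $T_{z_i}=z_i^{-1}(\mathbb{R}_{<0})$ to the rotated half-line $z_i^{-1}\!\bigl(e^{-\ay\theta_i}\mathbb{R}_{<0}\bigr)$, so $Z$ lies in $Z^p_{\mathbb{R}}(X,n)$ after rotation by $\underline{\theta}=(\theta_1,\dots,\theta_n)$ if and only if the components and iterated faces of $Z$ meet the correspondingly rotated wavefronts properly. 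For each of the finitely many intersection conditions parameterised by $(i,k)$, a real Sard/Bertini argument on the compact torus of phases shows that the ``bad'' locus in $\underline{\theta}$ has measure zero; a generic $\underline{\theta}$ therefore satisfies all conditions simultaneously, producing a single honest integral cycle $R(Z):=\mathrm{rot}_{\underline{\theta}}Z\in Z^p_{\mathbb{R}}(X,n)$ with no averaging involved.

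Next I would build the chain homotopy by sweeping along a smooth path $\underline{\theta}(t)\in(S^1)^n$, $t\in[0,1]$, connecting the identity to the chosen generic $\underline{\theta}$. Reparametrising $[0,1]$ as a real arc in an auxiliary $\mathbb{P}^1\setminus\{1\}$-factor, the graph of the orbit of $Z$ along this path defines an admissible cycle $W\in c^p(X,n+1)$ whose Bloch boundary is $Z-R(Z)$. Functoriality with $\partial_B$, and hence promotion of $R$ to an honest chain map $Z^p(X,\bullet)\to Z^p_{\mathbb{R}}(X,\bullet)$, is then enforced by using the same path $\underline{\theta}(t)$ (truncated to the relevant coordinates) on every face of $Z$, which makes the boundary of $W$ match the sweeps on lower-dimensional pieces on the nose.

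The main obstacle is to make this construction \emph{uniform over the whole cubical diagram at once}: the generic $\underline{\theta}$ must work not just for one given $Z$ but simultaneously for all its iterated faces, and the sweep cycles $W$ must fit together so that the assignment $Z\mapsto W$ is a homomorphism of complexes. Classical transversality arguments produce a single transverse translate of one cycle at a time; assembling them into a chain map is precisely where the [KL] proof falls back on Kleiman transversality in $K$-theory via BGRR, which forces denominators through the Chern character. To stay integral one needs a degree-by-degree inductive construction of $W$ across the cubical diagram that synchronises the phase parameter along faces, together with a verification that the joint bad locus in the phase torus still has measure zero when one imposes the intersection conditions one degree higher (so that $W$ itself meets the nested wavefronts $T_{z_1}\cap\cdots\cap T_{z_j}$ and all sub-facets properly). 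Establishing this simultaneous genericity compatibly with $\partial_B$, without ever tensoring with $\mathbb{Q}$, is where I expect the real technical weight of the statement to sit.
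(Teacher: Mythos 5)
There is a fundamental mismatch here: the paper does not prove this theorem at all. It is quoted from Kerr--Lewis [KL], and --- as the introduction of this paper stresses --- the proof there passes through Kleiman transversality in $K$-theory and Bloch's Grothendieck--Riemann--Roch, so it only establishes a quasi-isomorphism \emph{after tensoring with} $\mathbb{Q}$. Whether $\imath$ is an \emph{integral} quasi-isomorphism is explicitly stated to be an open and apparently difficult problem, and circumventing it is the entire raison d'\^etre of the paper (one perturbs the wavefronts $T_{z_i}$ rather than the cycles). Your proposal sets out to prove the integral statement directly, which is not what the cited theorem asserts, and your own closing paragraph concedes that the decisive step --- simultaneous genericity compatible with $\partial_B$ without denominators --- is not established. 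So as a proof this is incomplete by your own admission, and it targets a stronger claim than the one the paper actually uses.

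Beyond that, the central construction has a gap that I do not think can be repaired along these lines. The sweep $W$ you describe --- the graph of the orbit of $Z$ under a real path $\underline{\theta}(t)$, $t\in[0,1]$, in the phase torus --- is a real-analytic chain of odd real dimension, not a complex algebraic subvariety of $X\times\Box^{n+1}$. It is therefore not an element of $c^p(X,n+1)$, and $\partial_B W$ is not defined: the Bloch differential is restriction of algebraic cycles to algebraic faces, and chain homotopies in $Z^p(X,\bullet)$ must be built from algebraic families (compare the paper's normalization homotopy in Section~2.3, which uses the algebraic maps $h^l$). Reparametrising $[0,1]$ inside an auxiliary $\mathbb{P}^1\setminus\{1\}$ does not algebraize the family, since the rotation action is only real-analytic in $\underline{\theta}$. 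There are further problems even at the level of the single move $R(Z)=\mathrm{rot}_{\underline{\theta}}Z$: the rotation $z_i\mapsto e^{\ay\theta_i}z_i$ fixes $0$ and $\infty$ but moves the deleted point $1$, so it is not an automorphism of $\Box^n$, and for generic $\underline{\theta}$ the rotated cycle is not defined over the ground field $k$. The viable version of your idea --- keep $Z$ fixed and rotate the branch cuts $T_{z_i}$ by independent generic phases, then control the dependence on the phases by a homotopy of \emph{Deligne cochains} rather than of cycles --- is precisely the strategy the paper carries out in Sections~\ref{S3}--\ref{htpy}, and it yields an integral regulator without ever needing the integral moving lemma you are attempting.
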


\subsection{Normalized cycles}\label{nor}

Higher Chow groups may also be computed by complexes of cycles that have trivial boundary on all but one face.
\begin{Definition}$\mathscr{N}^p(X,n):=\{Z\in Z^p(X,n)\,|\,\partial^{\infty}_i Z=0$ for $i<n$, $\partial^0_j Z=0$ for any $j\}$, where $\partial_i^{\mathfrak{f}} Z:=\big(\rho^{\mathfrak{f}}_i\big)^* Z \in Z^p(X,n-1)$.
\end{Definition}
In this section, we will write down an explicit retraction of $Z^p(X,\bullet)$ onto the normalized cycle complex which is homotopic to the identity. The construction is derived from Bloch's manuscript \cite{Bl1}, by replacing the notations from $\big(\mathbb{A}^1\big)^n$ (using $\{0,1\}$ as boundary) by $\big(\mathbb{P}^1\setminus \{1\}\big)^n$ (using $\{0,\infty\}$ as boundary). In addition, Bloch uses a different definition for the normalized cycles: $\mathscr{N}'^p(X,\bullet):=\{Z\in Z^p(X,n)\,|\,\partial^{\infty}_iZ=0$ for $i>1$, $\partial^0_j Z=0$ for any $j\}$; so we need to apply a ``conjugation'' to the proof in \cite{Bl1} as well.

Define $Z^p_{\infty,i}(X,\bullet)=\{Z\in c^p(X,\bullet)\,|\,\partial^{\infty}_jZ=0$ for $j<n-i$, $\partial^0_kZ=0$ for any~$k\}$. We have $Z^p_{\infty,0}(X,\bullet)=\mathscr{N}^p(X,\bullet)$, and inclusions of complexes $Z^p_{\infty,0}(X,\bullet)\subseteq Z^p_{\infty,1}(X,\bullet)\subseteq \cdots$ which stabilize to $Z^p(X,\bullet)$ in any degree. More precisely, we have $Z^p_{\infty,i}(X,n) = Z^p(X,n)$ for $i\geq n$, since $d^p(X,n)\cap Z^p_{\infty,i}(X,n)=\{0\}$.

\begin{Theorem}The inclusion $\mathscr{N}^p(X,\bullet) \subset Z^p(X,\bullet)$ is an integral quasi-isomorphism.
\end{Theorem}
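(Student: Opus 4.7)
The strategy is to induct on the finite filtration
$\mathscr{N}^p(X,\bullet)=Z^p_{\infty,0}(X,\bullet)\subseteq Z^p_{\infty,1}(X,\bullet)\subseteq\cdots\subseteq Z^p(X,\bullet)$
introduced above, reducing the theorem to showing that each inclusion
$Z^p_{\infty,i-1}(X,\bullet)\hookrightarrow Z^p_{\infty,i}(X,\bullet)$
is an integral quasi-isomorphism. Since $Z^p_{\infty,i}(X,n)=Z^p(X,n)$ for $i\geq n$, and since the intersection $Z^p_{\infty,i}\cap d^p=0$ lets us work with honest subcomplexes of $c^p(X,\bullet)$, establishing the inductive step gives the theorem.

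For the inductive step I would write down, on each $Z\in Z^p_{\infty,i}(X,n)$, an explicit degenerate correction of the form
$\phi(Z):=Z-\pi_{n-i}^{*}\rho_{n-i}^{\infty\,*}Z$
and verify directly from the cubical identities relating $\pi_j$ and $\rho_k^{\mathfrak f}$ that: (a)~$\phi$ lands in $Z^p_{\infty,i-1}(X,n)$, because the subtraction kills $\partial^{\infty}_{n-i}$ without introducing any new $\partial^0_\ast$ or $\partial^{\infty}_{<n-i}$ boundary; (b)~$\phi$ commutes with the Bloch differential $\partial_B$; and (c)~$\phi$ agrees with the identity modulo $d^p(X,n)$, hence descends to the identity on $Z^p(X,\bullet)$. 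Thus $\phi$ is a genuine chain-level retraction from $Z^p_{\infty,i}$ onto $Z^p_{\infty,i-1}$.

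The essential step is then to produce a chain homotopy $h\colon Z^p_{\infty,i}(X,\bullet)\to Z^p_{\infty,i}(X,\bullet+1)$ with $\partial_B h+h\partial_B=\phi-{\rm id}$. For this I would transport Bloch's cylinder-style construction from \cite{Bl1}: given a cycle $Z\in Z^p_{\infty,i}(X,n)$, the homotopy pulls $Z$ back along a suitable cubical cylinder that inserts one extra $\mathbb{P}^1$-coordinate interpolating between $Z$ on one facet and $\pi_{n-i}^{*}\rho_{n-i}^{\infty\,*}Z$ on the opposite facet, giving an element of $c^p(X,n+1)$ whose other faces are forced to vanish by the vanishing boundaries of $Z$. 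Because Bloch's manuscript works with $\big(\mathbb{A}^1\big)^n$ and normalizes at $z_i=1$ (with $\{0,1\}$ as boundary) while here we use $\big(\mathbb{P}^1\setminus\{1\}\big)^n$ normalized at $z_i=\infty$ (with $\{0,\infty\}$ as boundary), one has to systematically ``conjugate'' Bloch's formulas by swapping $1\leftrightarrow\infty$ in all face inclusions and reindexing coordinates so that the cylinder acts on the $(n-i)$-th slot rather than the first.

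The main obstacle I anticipate is bookkeeping rather than conceptual: the facet being cancelled moves with $i$ and $n$, so the signs in $\phi$ and $h$ and the compatibility $\partial_B h+h\partial_B=\phi-{\rm id}$ must be tracked uniformly across the induction, and every verification must be done at the level of $c^p$ (not just modulo $d^p$, since the whole point of the lift is to stay inside the subcomplex $Z^p_{\infty,i-1}$). Once this combinatorial transcription of Bloch's argument is completed and the cubical identities are checked, the homotopy equivalence is immediate, and iterating over $i=1,\ldots,n$ produces the desired explicit $\mathbb{Z}$-homotopy equivalence between $\mathscr{N}^p(X,\bullet)$ and $Z^p(X,\bullet)$.
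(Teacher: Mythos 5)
There is a genuine gap in step~(a), and it is conceptual rather than bookkeeping. Your correction $\phi(Z)=Z-\pi_{n-i}^{*}\rho_{n-i}^{\infty\,*}Z$ does \emph{not} land in $Z^p_{\infty,i-1}(X,n)$. Indeed, since $\pi_{n-i}\circ\rho_{n-i}^{0}=\mathrm{id}$ on $\Box^{n-1}$, one has $\rho_{n-i}^{0\,*}\pi_{n-i}^{*}=\mathrm{id}$, and therefore
\begin{gather*}
\partial^{0}_{n-i}\phi(Z)=\partial^{0}_{n-i}Z-\rho_{n-i}^{0\,*}\pi_{n-i}^{*}\rho_{n-i}^{\infty\,*}Z = 0-\rho_{n-i}^{\infty\,*}Z=-\partial^{\infty}_{n-i}Z,
\end{gather*}
which is nonzero unless $Z$ was already in $Z^p_{\infty,i-1}$. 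So the naive degenerate subtraction kills the $\infty$-boundary at position $n-i$ at the cost of creating a fresh $0$-boundary in the same slot, and $\phi(Z)$ lands outside the subcomplex $Z^p_{\infty,n}(X,n)$ (the requirement $\partial^0_k=0$ for all $k$ is precisely what lets you work with honest subcomplexes, and it is broken). Your claims~(a) and~(b) then both fail, and the homotopy you propose to build would have to cancel a boundary that your $\phi$ itself introduces.

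This is exactly the obstacle Bloch's construction (and the paper's transcription of it) is engineered to avoid: instead of pulling back along the projection $\pi_{n-i}$, one pulls back along the ``multiplicative pinch'' $h^l\colon\square^{n+1}\to\square^n$ given by
\begin{gather*}
h^l(z_1,\ldots,z_{n+1})=\big(z_1,\ldots,z_l,\tfrac{z_{l+1}z_{l+2}}{z_{l+1}+z_{l+2}-1},z_{l+3},\ldots,z_{n+1}\big),
\end{gather*}
with $l=n-i-1$. The key point is that $\tfrac{z_{l+1}z_{l+2}}{z_{l+1}+z_{l+2}-1}$ restricts to the \emph{identity} in the remaining coordinate on the faces $z_{l+1}=\infty$ and $z_{l+2}=\infty$, while it restricts to $0$ on the faces $z_{l+1}=0$ and $z_{l+2}=0$. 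Consequently the map $\mathrm{Id}-\big(\partial\circ H^l+H^l\circ\partial\big)$ both kills $\partial^{\infty}_{n-i}$ and preserves all the $\partial^{0}_{*}=0$ (and lower $\partial^{\infty}_{*}=0$) conditions, which a linear degenerate correction cannot do. Once you replace $\pi_{n-i}^{*}\rho_{n-i}^{\infty\,*}$ by this pinch pullback, the filtration/induction scheme you describe does go through as planned, with the homotopy to the identity coming for free since each factor is of the form $\mathrm{Id}-(\partial H+H\partial)$.
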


\begin{proof}Any $Z\in Z^p(X,n)$ may be lifted to $c^p(X,n)$, and we may add degenerate cycles to any element of $c^p(X,n)$ to force it into $Z^p_{\infty,n}(X,n)$. The (well-defined) map given by this process is an isomorphism, and we shall tacitly equate $Z^p(X,n)$ and $Z^p_{\infty,n}(X,n)$ in what follows.

For each integer $l \leq n-1$, define $h^l\colon \square^{n+1}\rightarrow\square^n$ by
\begin{gather*}
h^l(z_1,\ldots,z_{n+1}):=\big(z_1,\ldots,z_{l},\tfrac{z_{l+1}z_{l+2}}{z_{l+1}+z_{l+2}-1},z_{l+3},\ldots,z_{n+1}\big)
\end{gather*}
and for $Z\in Z^p(X,n)$, define $H^l(Z):=(-1)^{l}\big(h^l\big)^{-1}(Z)\in Z^p(X,n+1)$. If $l\geq n$, set $H^l(Z)=0$. The map
\begin{gather*}
\phi:=\cdots\big(\mathrm{Id}-\big(\partial\circ H^l+H^l\circ \partial\big)\big)\circ(\mathrm{Id}-\big(\partial\circ H^{l-1}+H^{l-1}\circ \partial\big)\big)\circ \\
\hphantom{\phi:=}{} \cdots \circ\big(\mathrm{Id}-\big(\partial\circ H^0+H^0\circ \partial\big)\big)
\end{gather*}
stabilizes in any degree and so defines an endomorphism $\phi\colon Z^p(X,\bullet)\rightarrow Z^p(X,\bullet)$, which is visibly homotopic to the identity.

To determine its image, write (for any $Z\in Z^p(X,n)$)
\begin{gather*}
 \big(\partial \circ H^l\big)Z=\sum_{k=1}^{n-l-1}(-1)^{n-l+k+1}\partial_{n-k+1}^{\infty}Z\big(z_1,\ldots,\tfrac{z_{l+1}z_{l+2}}{z_{l+1}+z_{l+2}-1},\ldots,z_n\big) \\
\hphantom{\big(\partial \circ H^l\big)Z=}{}+\sum_{k=n-l+1}^{n}(-1)^{n-l+k}\partial_{n-k+1}^{\infty}Z\big(z_1,\ldots,\tfrac{z_{l}z_{l+1}}{z_{l}+z_{l+1}-1},\ldots,z_n\big)
\end{gather*}
and
\begin{gather*}
 \big(H^l\circ \partial\big)Z=\sum_{k=1}^{n}(-1)^{n-l+k}\partial_{n-k+1}^{\infty}Z\big(z_1,\ldots,\tfrac{z_{l+1}z_{l+2}}{z_{l+1}+z_{l+2}-1},\ldots,z_n\big),
\end{gather*}
where the notation means that we pull back (the equations defining) $\partial^{\infty}_{n-k+1}Z$ via $h^l$ (or $h^{l-1}$): $\square^n \to \square^{n-1}$. Thus we have
\begin{gather*}
 \big(\partial\circ H^l+H^l\circ \partial\big)Z=\sum_{k=n-l+1}^{n}(-1)^{n-l+k}\partial_{n-k+1}^{\infty}Z\big(z_1,\ldots,\tfrac{z_{l}z_{l+1}}{z_{l}+z_{l+1}-1},\ldots,z_n\big) \\
\hphantom{\big(\partial\circ H^l+H^l\circ \partial\big)Z=}{}+\sum_{k=n-l}^{n}(-1)^{n-l+k}\partial_{n-k+1}^{\infty}Z\big(z_1,\ldots,\tfrac{z_{l+1}z_{l+2}}{z_{l+1}+z_{l+2}-1},\ldots,z_n\big).
\end{gather*}
In particular, for $Z\in Z^p_{\infty,i}(X,\bullet)$, we have $\big(d\circ H^l+H^l\circ d\big)Z=0$ for $l \leq n-i-2$. For $l=n-i-1$, we find
\begin{gather*}
 Z':=Z-\big(\partial\circ H^l+H^l\circ\partial\big)Z = Z-\partial^{\infty}_{n-i}Z\big(z_1,\ldots,\tfrac{z_{n-i}z_{n-i+1}}{z_{n-i}+z_{n-i+1}-1},\ldots,z_n\big),
\end{gather*}
which belongs to $Z^p_{\infty,i-1}$. Applying $\big(\mathrm{Id}-\big(\partial\circ H^{l+1}+H^{l+1}\circ\partial\big)\big)$ then maps $Z'$ to some $Z''\in Z^p_{\infty,i-2}$, and so forth until finally we reach $Z^p_{\infty,0}(X,n)=\mathscr{N}^p(X,n)$. Since all the $\partial\circ H^l + H^l\circ \partial$ are zero on $Z^p_{\infty,0}$, $\phi|_{\mathscr{N}}$ gives the identity on normalized cycles.

We have thus constructed a morphism $\phi\colon Z^p(X,\bullet)\to \mathscr{N}^p(X,\bullet)$, whose composition with the inclusion $\mathscr{N}^p\hookrightarrow Z^p$ is homotopic to (resp. equal to) the identity on $Z^p$ (resp. $\mathscr{N}^p$); thus $\phi$ and the inclusion are both quasi-isomorphisms.
\end{proof}

As explicit expressions for $\phi$ in low dimension, we have
\begin{gather*}\phi(Z(z_1,z_2))=Z(z_1,z_2)-(\partial^{\infty}_1Z)\big(\tfrac{z_1z_2}{z_1+z_2-1}\big),\end{gather*}
and
\begin{gather*}
 \phi(Z(z_1,z_2,z_3))=Z(z_1,z_2,z_3)-(\partial^{\infty}_2Z)\big(z_1,\tfrac{z_2z_3}{z_2+z_3-1}\big)- (\partial^{\infty}_1Z)\big(\tfrac{z_1z_2}{z_1+z_2-1},z_3\big) \\
\hphantom{\phi(Z(z_1,z_2,z_3))=}{}+(\partial^{\infty}_1Z)\big(z_1,\tfrac{z_2z_3}{z_2+z_3-1}\big).
\end{gather*}

\section{Simple perturbations}\label{simp}

The Kerr--Lewis moving lemma can only yield a rational regulator due to the passage through $K$-theory in the proof. Instead, one might consider maps of complexes on a nested family of subcomplexes of $Z^p_{\mathbb{R}}(X,\bullet)$, given by ``perturbing'' the conditions defining $Z_{\mathbb{R}}^p(X,\bullet)$. Though this turns out to be too naive, it is the first step toward a strategy that works.

Begin by defining $Z^p_{\Epsilon}(X,\bullet)$ to be the subcomplex of $Z^p(X,n)$ given by the cycles that intersect $X\times\big(T_{z_1}^{\epsilon}\cap\cdots\cap T_{z_i}^{\epsilon}\big)$ and $X\times\big(T_{z_1}^{\epsilon}\cap\cdots\cap T_{z_i}^{\epsilon}\cap\partial^k\Box^n\big)$ properly for all $1\leq j\leq n$, $1\leq k<n$ and $0<\epsilon<\Epsilon$. Here $T_z^{\epsilon}:=T_{e^{i\epsilon}z}$ is given by $\arg(z)=\pi-\epsilon$, the ``perturbation'' of the branch cut of $\log(z)$ in the currents defined below.

In order for this nested family of subcomplexes to be any better than $Z^p_{\mathbb{R}}(X,\bullet)$, we must have that their union gives us the original $Z^p$:
\begin{gather*}\bigcup_{\Epsilon} Z^p_{\Epsilon}(X,\bullet)=Z^p(X,\bullet).\end{gather*}

Unfortunately, this fails in a very simple case:

\begin{Proposition} \label{counterex}For $X=\operatorname{Spec}(\mathbb{Q}(\ay))$, we have $\bigcup_{\Epsilon} Z^2_{\Epsilon}(X,3) \subsetneq Z^2(X,3).$
\end{Proposition}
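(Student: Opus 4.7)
My plan is first to reduce the $\Epsilon$-properness condition to a purely geometric condition on the argument map, then to construct the counterexample explicitly. Since $T^\epsilon_{z_1}\cap T^\epsilon_{z_2}\cap T^\epsilon_{z_3}\subset\Box^3$ has real codimension $3$ while any $V\in Z^2(X,3)$ has real dimension $2$, proper intersection with this subset just means empty intersection. Thus $V\in Z^2_\Epsilon$ precisely means that for every $\epsilon\in(0,\Epsilon)$, no $t\in V$ satisfies $\arg f_1(t)=\arg f_2(t)=\arg f_3(t)=\pi-\epsilon$. My task therefore reduces to producing an irreducible cycle $V\in Z^2(\operatorname{Spec}(\mathbb{Q}(\ay)),3)$ whose real-analytic common-argument locus $\gamma:=\{t\in V\mid\arg f_1(t)=\arg f_2(t)=\arg f_3(t)\}$ has common-argument values accumulating at $\pi$ from below.

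I realize $V$ as the image of $\mathbb{P}^1$ under a triple of rational functions $f_i\in\mathbb{Q}(\ay)(t)$ satisfying: (a) at a base point $t_0\in\mathbb{P}^1$, $f_i(t_0)=-1$ and the derivative $f_i'(t_0)=b\in\mathbb{C}^\times$ is a common value (so $V$ passes through $(-1,-1,-1)$ with a ``diagonal'' tangent); (b) the second-order coefficients $c_i=\tfrac12 f_i''(t_0)$ are tuned so that $c_2-c_1$ and $c_3-c_2$ lie on a common ray of $\mathbb{C}$ (the Gaussian coefficients make this easy to arrange); (c) the six divisors $(f_i)_0,(f_i)_\infty$ on $\mathbb{P}^1$ are pairwise disjoint, ensuring proper intersection with every face of $\Box^3$ and thus $V\in Z^2(X,3)$. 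The Taylor expansion
\[\arg f_i(t_0+\delta)-\pi \;=\; -\operatorname{Im}(b\delta)\;-\;\operatorname{Im}((c_i+b^2/2)\delta^2)\;+\;O(\delta^3)\]
shows, by (a), that the pairwise differences $h_{ij}:=\arg f_i-\arg f_j$ vanish to order $2$ at $t_0$, and by (b) that their leading quadratic forms are proportional real quadratics in $(\operatorname{Re}\delta,\operatorname{Im}\delta)$, so that their common leading-order zero set is a union of real lines through $t_0$. A real-analytic preparation argument (in the sense of the Malgrange preparation theorem) promotes this to a genuine $1$-dimensional real-analytic arc $\gamma_0\subset\gamma$ through $t_0$, along which the common value of $\arg f_i$ equals $\pi-\operatorname{Im}(b\delta)+O(\delta^2)$ and therefore sweeps continuously through an open interval in $(\pi-\epsilon_0,\pi)$ for some $\epsilon_0>0$. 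Given any $\Epsilon>0$, the point of $\gamma_0$ with common argument $\pi-\epsilon$ (for any $\epsilon\in(0,\min(\Epsilon,\epsilon_0))$) lies in $V\cap T^\epsilon_{z_1}\cap T^\epsilon_{z_2}\cap T^\epsilon_{z_3}$, so $V\notin Z^2_\Epsilon$; since this holds for every $\Epsilon>0$, one concludes $V\in Z^2\setminus\bigcup_\Epsilon Z^2_\Epsilon$.

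The delicate part is the propagation of the leading-order structure to a genuine $1$-dimensional analytic arc: the proportionality condition (b) ensures that the two real-analytic equations $h_{12}=h_{23}=0$ are dependent at leading order, but one still must analytically factor out the shared leading quadratic to produce a regular equation cutting out $\gamma_0$. This typically requires enforcing one or more further matching conditions on the higher-order Taylor coefficients of the $f_i$'s (where the $\mathbb{Q}(\ay)$-freedom is essential), after which Malgrange's theorem applies. An alternative route, should this direct approach stall, is to choose $f_1,f_2,f_3$ so that both $f_1/f_2$ and $f_2/f_3$ become real positive along a common real-algebraic subset of $\mathbb{P}^1$, yielding the $1$-dimensional common-positivity locus $\gamma_0$ directly.
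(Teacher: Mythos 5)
Your reduction is correct: since $X=\operatorname{Spec}(\mathbb{Q}(\ay))$ and a cycle in $Z^2(X,3)$ has complex dimension $1$, properness with $T^\epsilon_{z_1}\cap T^\epsilon_{z_2}\cap T^\epsilon_{z_3}$ (real codimension $3$) forces empty intersection, so what you must exhibit is a cycle whose common-argument locus takes values accumulating at $\pi$ from below. You also correctly identify the two available strategies. However, as written the proposal has a real gap that you yourself flag: in the perturbative construction around $(-1,-1,-1)$ you observe that the leading quadratic parts of $h_{12}$ and $h_{23}$ can be made proportional, but you concede that the higher-order terms need further matching, and you do not specify those matching conditions, verify that they can be realized by rational functions over $\mathbb{Q}(\ay)$, or carry out the Malgrange preparation. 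You also do not verify condition (c) (pairwise disjoint zero/pole divisors, i.e., proper intersection with all faces of $\Box^3$) for any concrete choice, so there is no explicit element of $Z^2(X,3)\setminus\bigcup_\Epsilon Z^2_\Epsilon(X,3)$ at the end of your argument. A proposal that terminates with ``this typically requires enforcing one or more further matching conditions\ldots after which Malgrange's theorem applies'' is a plan, not a proof.

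Your ``alternative route'' in the last sentence is exactly what the paper does, and it sidesteps the whole preparation-theorem issue: the paper takes $F(z)=\ay z-1$, $G(z)=-\tfrac{(1+z)(1+3z)}{(1+\ay z)(1-2z)}$, $H(z)=\tfrac{\ay z-1}{3+z}$. One checks directly that $F/H=3+z$ and $F/G=\tfrac{(1+z^2)(1-2z)}{(1+z)(1+3z)}$ are real and positive for small real $z>0$, so along the real arc $z=\tan\epsilon$ (small $\epsilon>0$) all three functions share the argument $\pi-\epsilon$; meanwhile the zero/pole divisors are distinct points of $\mathbb{P}^1$, so $(F,G,H)_{z\in\mathbb{P}^1}\in Z^2(X,3)$. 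That single explicit cycle proves the proposition in a few lines. I would encourage you to commit to the alternative route and produce an explicit triple (verifying both the common-argument condition and proper intersection with faces), rather than leave the heavy lifting to an unspecified preparation argument.
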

\begin{proof}Let $F(z)=\ay z-1$, $G(z)=-\tfrac{(1+z)(1+3z)}{(1+\ay z)(1-2z)}$, and $H(z)=\tfrac{\ay z-1}{3+z}$. Then we have $Z=(F(z),G(z),H(z))_{z\in\mathbb{P}^1}\in Z^2(\text{pt},3)$; but for all $\Epsilon>0$, $Z \notin Z^2_{\Epsilon}(\text{pt},3)$. More precisely, for any $\epsilon>0$, we have $\dim_{\mathbb{R}}\big(Z\cap T^{\epsilon}_{z_1}\cap T^{\epsilon}_{z_2}\cap T^{\epsilon}_{z_3}\big)=0$, not $-1$ (i.e., empty) as required for a proper-analytic intersection. To see this, we need to find a value of $z$ for which $\mathrm{arg}(F)$, $\mathrm{arg}(G)$, $\mathrm{arg}(H)$ equal to $\pi-\epsilon$. Such a value is given by $z=\tan(\epsilon)$.
\end{proof}

Thus we need to find another way to do the ``perturbation'', which will be given in the next section.

\section{Multiple perturbations}\label{S3}
In order to have $Z^{\rm an}$ meet the deformations of $\{T_{z_i}\}$ (and their intersections) properly -- say, for an example like that in the above proof -- we clearly need to make use of the extra degrees of freedom allowed by perturbing each ``branch-cut phase'' independently. For convenience, we shall use the multi-index notation $\underline{\epsilon}:= (\epsilon_1,\ldots ,\epsilon_n )$ in what follows.

Now we are thinking of $T_{z_i}^{\epsilon_i}$ as the location of the jump in the 0-current $\log(z_i)$; these 0-currents will appear in the definition of the regulator-currents $R_Z^{\underline{\epsilon}}$ appearing in the next section. To use these currents to define Abel--Jacobi maps, we will need them to induce morphisms of complexes from a subcomplex of $Z^p(X,\bullet)$ to $C_{\mathscr{D}}^{2p-\bullet}(X,\mathbb{Z}(p))$. Unfortunately, if $Z$ has boundaries at more than one facet of~$\square^n$, say $\partial_1 Z=\big(\rho_1^0\big)^*Z$ and $\partial_2 Z=\big(\rho_2^0\big)^* Z$, the residue terms in $d\big[R_Z^{(\epsilon_1,\ldots,\epsilon_n)}\big]$ will take the form $R_{\partial_1 Z}^{(\epsilon_2,\ldots , \epsilon_n)}$ resp.~$R_{\partial_2 Z}^{(\epsilon_1,\epsilon_3,\ldots ,\epsilon_n)}$. This clearly conflicts with having $D\big(T^{\underline{\epsilon}}_Z,\Omega_Z,R^{\underline{\epsilon}}_Z\big)=\big(T^{\underline{\epsilon'}}_{\partial Z},\Omega_{\partial Z},R^{\underline{\epsilon'}}_{\partial Z}\big)$ for a single choice of $\underline{\epsilon'}$, so we shall need to restrict to the normalized cycles $\mathscr{N}^p(X,\bullet)$ defined in Section~\ref{nor}.

For $\Epsilon>0$, define $B_{\Epsilon}$ as the set of infinite sequences $(\epsilon_1,\epsilon_2,\ldots)$ satisfying
\begin{gather}\label{be}
0<\epsilon_1<\Epsilon,\quad 0<\epsilon_2<\exp(-1/\epsilon_1),\quad 0<\epsilon_3<\exp(-1/\epsilon_2),\quad \ldots,
\end{gather}
and define $B^n_{\Epsilon}$ to comprise the $n$-tuples $\underline{\epsilon}$ satisfying~\eqref{be}.

\begin{Definition}$\mathscr{N}^p_{\Epsilon}(X,\bullet):=\{Z\in\mathscr{N}^p(X,\bullet)\,|\,Z$ intersects $X\times \big(T^{\epsilon_1}_{z_1}\cap\cdots\cap T^{\epsilon_i}_{z_i}\big)$ and $X\times \big(T^{\epsilon_1}_{z_1}\cap\cdots\cap T^{\epsilon_i}_{z_i}\cap\partial^k\Box^n\big)$ properly $\forall\, i,k,\underline{\epsilon}\in B^{\bullet}_{\Epsilon}\}$.
\end{Definition}
\begin{Theorem}$\bigcup_{\varepsilon}\mathscr{N}^p_{\Epsilon}(X,\bullet)=\mathscr{N}^p(X,\bullet)$.
\end{Theorem}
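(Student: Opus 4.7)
Fix $Z\in\mathscr{N}^p(X,n)$ and decompose $Z=\sum n_V[V]$ into irreducible complex components. For each $V$, each subface $F\subseteq\Box^n$, and each $i\leq n$, introduce the bad locus
\[
\mathrm{Bad}_{V,F,i}:=\bigl\{\underline{\epsilon}\in(0,\pi)^i\,\big|\,V\cap(X\times F)\cap T^{\epsilon_1}_{z_1}\cap\cdots\cap T^{\epsilon_i}_{z_i}\ \text{is non-proper}\bigr\}.
\]
Since each $T^{\epsilon}_{z}$ is cut out by polynomial inequalities in $(\mathrm{Re}\,z,\mathrm{Im}\,z,\sin\epsilon,\cos\epsilon)$, the locus $\mathrm{Bad}_{V,F,i}$ is closed subanalytic in $(0,\pi)^i$. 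The plan is to show that the (finite) union of these bad loci is disjoint from $B_{\Epsilon}^n$ once $\Epsilon$ is small enough, by exploiting that $B_{\Epsilon}^n$ is super-exponentially thinner than any subanalytic hypersurface through $\underline{0}$.

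\textbf{Inductive finiteness and \L{}ojasiewicz bound.} The technical heart is an induction on $i$: for each ``good'' $(\epsilon_1,\ldots,\epsilon_i)$, only \emph{finitely many} $\epsilon_{i+1}\in(0,\pi)$ can be bad. The base $i=0$ uses complex algebraicity: $z_1(V\cap F)\subseteq\mathbb{P}^1$ is Zariski-constructible, hence either $2$-dimensional (missing every ray in real codimension $1$) or a single point $c$, pinning down $\epsilon_1=\pi-\arg(c)$ as the unique bad value. For the inductive step, $W_i:=V\cap F\cap T^{\epsilon_1}_{z_1}\cap\cdots\cap T^{\epsilon_i}_{z_i}$ is real-analytic with finitely many connected components (subanalytic stratification), and a component $W'\subseteq W_i$ lies in $T^{\epsilon_{i+1}}_{z_{i+1}}$ only when $z_{i+1}(W')\subset\mathbb{C}$ is contained in a single ray from $0$, which fixes $\epsilon_{i+1}$ uniquely. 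Thus $\beta_{i+1}(\epsilon_1,\ldots,\epsilon_i):=\min\{\text{bad }\epsilon_{i+1}\}>0$ is a positive subanalytic function of the preceding coordinates, and the \L{}ojasiewicz inequality yields $\beta_{i+1}\geq c\,\epsilon_1^N$ in a neighborhood of $\underline{0}$ for some $c,N>0$.

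\textbf{Conclusion and obstacle.} For $\underline{\epsilon}\in B_{\Epsilon}^n$, the chain $\epsilon_{j+1}<\exp(-1/\epsilon_j)$ together with $\epsilon_j<\epsilon_1<\Epsilon<1$ gives $\epsilon_{i+1}<\exp(-1/\epsilon_1)$, and the elementary inequality $\exp(-1/\epsilon_1)<c\,\epsilon_1^N$ holds for every $\epsilon_1\in(0,\Epsilon)$ once $\Epsilon$ is small enough (since $1/\epsilon_1$ dominates $N\log(1/\epsilon_1)-\log c$). Hence $\epsilon_{i+1}<\beta_{i+1}(\epsilon_1,\ldots,\epsilon_i)$, so $\epsilon_{i+1}$ is not bad. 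Handling the base case separately (shrink $\Epsilon$ below the smallest bad $\epsilon_1$) and taking the minimum $\Epsilon$ over the finitely many triples $(V,F,i)$ yields the required $\Epsilon$ with $Z\in\mathscr{N}^p_{\Epsilon}(X,n)$. The main obstacle is the inductive finiteness step: one must rule out a positive-dimensional family of bad $\epsilon_{i+1}$'s, which reduces to the real-analytic fact that a connected real-analytic subset of $\mathbb{C}$ contained in some ray through $0$ lies in a unique such ray, combined with finiteness of components of $W_i$. Once this is granted, the \L{}ojasiewicz-versus-iterated-exponentials comparison is essentially automatic.
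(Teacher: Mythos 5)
Your high-level strategy — show the bad parameter set is a thin subanalytic set and that $B_\Epsilon^n$ escapes it by super-exponential decay — is the same as the paper's, but you execute it quite differently. The paper packages the bad locus \emph{globally}: it composes with the real-algebraic slope map $\Theta^n\colon(\mathbb{C}^*)^n\to(\mathbb{P}^1_{\mathbb{R}})^n$, passes to a resolution of singularities $\tilde{Z}^*$, and identifies the bad locus with the non-flat locus $\Delta$ of $\Theta_Z$, which is Zariski-closed and proper by a dimension count. Its preimage $\tilde\Delta$ in $(S^1)^n$ is then a genuine real-analytic set, and the comparison with $B_\Epsilon^n$ is handled in one stroke by Lion--Rolin's preparation theorem for logarithmico-exponential functions, together with the flatness of $e^{-1/x}$ at $0$. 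You, by contrast, slice the bad locus inductively in $i$, define $\beta_{i+1}(\epsilon_1,\ldots,\epsilon_i)=\min\{\text{bad }\epsilon_{i+1}\}$, and try to bound this from below via \L{}ojasiewicz.

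The inductive route has real gaps. First, the assertion that $\beta_{i+1}$ is a (continuous, or even just subanalytic) function is not justified: it is a minimum over a family of angles attached to connected components of $W_i$, and those components can appear, merge, or vanish as $(\epsilon_1,\ldots,\epsilon_i)$ varies, so $\beta_{i+1}$ is prima facie only a partial function with potential jump discontinuities — exactly the regularity one needs to nail down before invoking \L{}ojasiewicz. Second, even granting subanalyticity, the \L{}ojasiewicz inequality would give $\beta_{i+1}\geq c\cdot\operatorname{dist}\big((\epsilon_1,\ldots,\epsilon_i),Z(\beta_{i+1})\big)^N$; your stated bound $\beta_{i+1}\geq c\,\epsilon_1^N$ tacitly assumes $Z(\beta_{i+1})\subseteq\{\epsilon_1=0\}$, which is not established and need not hold — the zero set could live on other coordinate walls or deeper in the boundary. (The conclusion is likely salvageable, because on $B_\Epsilon^i$ the distance to \emph{any} coordinate wall is at least $\epsilon_i$, and $e^{-1/\epsilon_i}<c\,\epsilon_i^N$ still wins; but the argument as written doesn't say this and relies on a specific, unjustified shape for $Z(\beta_{i+1})$.) The paper's flatness-plus-Lion--Rolin route is precisely designed to avoid both issues: it never needs to track components or define a min-function, and it gets a clean analytic bad locus to which the preparation theorem applies directly. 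If you want to complete the inductive version, the missing lemma is a uniform stratification of the family $\{W_i\}_{(\epsilon_1,\ldots,\epsilon_i)}$ establishing that $\beta_{i+1}$ is continuous subanalytic with zero set contained in the coordinate walls; without it, the \L{}ojasiewicz step is not available.
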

\begin{proof}Consider the projection $(\mathbb{C}^*)^n\rightarrow\big(S^1\big)^n \cong (\mathbb{R}/2\pi\mathbb{Z})^n$ defined by $\big(r_1e^{\ay\epsilon_1},\dots,r_ne^{\ay\epsilon_n}\big)\mapsto(\epsilon_1,\dots,\epsilon_n)$, whose fibers are $T^{\epsilon_1}_{z_1}\cap\cdots\cap T^{\epsilon_n}_{z_n}$. There is also a natural $2^n:1$ map $\big(S^1\big)^n\rightarrow\big(\mathbb{P}_{\mathbb{R}}^1\big)^n$ by taking slopes: $(\epsilon_1,\dots,\epsilon_n)\mapsto(\tan\epsilon_1,\dots,\tan\epsilon_n)$. The composite map $\Theta^n \colon (\mathbb{C}^*)^n\rightarrow\big(S^1\big)^n\rightarrow\big(\mathbb{P}_{\mathbb{R}}^1\big)^n$ is real algebraic, sending $(x_1+\ay y_1,\dots,x_n+\ay y_n)\mapsto(y_1/x_1,\dots, y_n/x_n)$.

Now let $Z\in\mathscr{N}^p(X,n)$ be given. Set $Z^*:=\bar{Z}\cap(X\times (\mathbb{C}^*)^n)$, and let $\tilde{Z^*}$ be its resolution of singularities. The intersections of $Z^*$ with the fibers of $\Theta^n_X\colon X\times(\mathbb{C}^*)^n\rightarrow X\times \big(\mathbb{P}_{\mathbb{R}}^1\big)^n$ are ${Z^*}\cap\big(X\times \big\{T^{-\epsilon_1}_{z_1}\cap\cdots\cap T^{-\epsilon_n}_{z_n}\big\}\big)$. Write $\Theta_Z$ for the composition of $\tilde{Z}^*\to X\times (\mathbb{C}^*)^n$ with $\Theta^n_X$. The set of~$\underline{\epsilon}$ for which these intersections are good is the complement of the non-flat locus $\Delta \subset\big(\mathbb{P}^1_{\mathbb{R}}\big)^n$ of $\Theta_Z$ (see~\cite{No}). Since the flat locus of an algebraic map is Zariski open, $\Delta \subset \big(\mathbb{P}^1_{\mathbb{R}}\big)^n$ is a real subvariety, which is proper by dimension considerations. (That is to say, if all the fibers had real dimension $>2\dim _{\mathbb{C}}(Z^*)-n$, then $Z^*$ would have real dimension $>2\dim _{\mathbb{C}}(Z^*)$, which is a~contradiction.)

Therefore the preimage $\tilde{\Delta}$ of $\Delta$ in $\big(S^1\big)^n$ is real analytic. By the form of the inequalities in~$B_{\Epsilon}$, we know that we can choose an~$\Epsilon>0$ such that $B^n_{\Epsilon}\cap\tilde{\Delta}=\varnothing$. (This follows from \cite[Theorem~1]{LR} for~$\tilde{\Delta}$, and the fact that all derivatives of $e^{-1/x}$ limit to $0$ at~0.) This means that $Z$ intersects $X\times \big(T^{\epsilon_1}_{z_1}\cap\cdots\cap T^{\epsilon_n}_{z_n}\big)$ properly $\forall\,\underline{\epsilon}\in B^{\bullet}_{\Epsilon}$, as desired.

Repeating the argument for $X\times(\mathbb{C}^*)^i\times\big(\mathbb{P}_{\mathbb{C}}^1\big)^{n-i}$ and $X\times(\mathbb{C}^*)^i\times(\{0,\infty\})^k\times\big(\mathbb{P}_{\mathbb{C}}^1\big)^{n-i-k}$, we pick the minimum of the required values of $\Epsilon$, so that $Z$ intersects $X\times \big(T^{\epsilon_1}_{z_1}\cap\cdots\cap T^{\epsilon_i}_{z_i}\big)$ and $X\times \big(T^{\epsilon_1}_{z_1}\cap\cdots\cap T^{\epsilon_i}_{z_i}\cap\partial^k\Box^n\big)$ properly $\forall\, i,k,\underline{\epsilon}\in B^{\bullet}_{\Epsilon}$, which means $Z\in \mathscr{N}^p_{\Epsilon}(X,n)$.
\end{proof}

\section{Abel--Jacobi maps}\label{S4}

In this section, we'll use the strategy in \cite{KLM} to define the Abel--Jacobi maps on our subcomplexes.

\subsection{Definition of Deligne cohomology}

The Deligne cohomology group $H^{2p+n}_{\mathscr{D}}(X,\mathbb{Z}(p))$ is given by the $n^{\text{th}}$ cohomology of the complex
\begin{gather*}
\mathcal{C}^{\bullet+2p}_{\mathcal{D}}(X,\mathbb{Z}(p)):=\{\mathcal{C}^{2p+\bullet}(X,\mathbb{Z}(p))\oplus F^p\mathcal{D}^{\bullet}(X)\oplus\mathcal{D}^{\bullet-1}(X)\}
\end{gather*}
with differential $D$ taking $(a,b,c)\mapsto(-\partial a,-\mathrm{d}[b],\mathrm{d}[c]-b+\delta_a)$, where $\delta_a$ denotes the current of integration over the chain $a$. Here $\mathcal{D}^k(X)$ denotes currents of degree $k$ on $X^{\rm an}$ and $\mathcal{C}^{k}(X,\mathbb{Z}(k))$ denotes $C^{\infty}$ (co)chains of real codimension $k$ and $\mathbb{Z}(k)=(2\pi\ay)^k \mathbb{Z}$ coefficients.

The cup product in Deligne cohomology is defined on the chain level by
\begin{gather*}(a,b,c)\cup(A,B,C):=\big(a\cap A,b\wedge B,c\wedge B + (-1)^{\deg(a)}\delta_a \cdot C \big).\end{gather*}
It becomes commutative upon passage to cohomology. (See \cite{We} for a commutative chain-level construction.) Note that
\begin{gather*}
D[(a,b,c)\cup(A,B,C)]=D(a,b,c)\cup(A,B,C)\; + \;(-1)^{\deg(a)}(a,b,c)\cup D(A,B,C).
\end{gather*}

\subsection{KLM currents}

Firstly we'll review the currents given in \cite{KLM}.
The currents on $\Box^n$ are given by $T_n:=T_{z_1}\cap T_{z_2}\cap \cdots \cap T_{z_n}$, $\Omega_n=\frac{\mathrm{d}z_1}{z_1}\wedge\frac{\mathrm{d}z_2}{z_2}\wedge \cdots \wedge\frac{\mathrm{d}z_n}{z_n}$, and \begin{gather*}R_n=\sum^n_{k=1}(-2\pi\ay)^{k-1}\delta_{T_{z_1} \cap T_{z_2}\cap \cdots \cap T_{z_{k-1}}}\log{z_k}\tfrac{\mathrm{d}z_{k+1}}{z_{k+1}} \wedge \cdots \wedge \tfrac{\mathrm{d}z_n}{z_n}.\end{gather*}
For currents on $X$ associated to a given $Z\in Z^p_{\mathbb{R}}(X,n)$, let $\pi_1\colon \tilde{Z}\rightarrow\Box^n$ and $\pi_2\colon \tilde{Z}\rightarrow X$ be the projections (where $\tilde{Z}$ is a desingularization). Then we have:
\begin{gather*}\widetilde{{\rm AJ}}^{p,n}_{\rm KLM}(Z):=(-2\pi\ay)^{p-n}(\pi_2)_*(\pi_1)^*((2\pi\ay)^nT_n,\Omega_n,R_n).\end{gather*}

\subsection[Currents on $\mathscr{N}^p_{\Epsilon}(X,n)$]{Currents on $\boldsymbol{\mathscr{N}^p_{\Epsilon}(X,n)}$}
Using a similar strategy, for a normalized precycle $Z\in \mathscr{N}^p_{\Epsilon}(X,n)$ and $\underline{\epsilon}\in B^n_{\Epsilon}$, we send
\begin{gather}\label{5.3eq}
Z\mapsto(-2\pi\ay)^{p-n}(\pi_2)_*(\pi_1)^*((2\pi\ay)^nT^{\underline{\epsilon}}_n,\Omega_n,R^{\underline{\epsilon}}_n)=:\mathcal{R}^{n,\underline{\epsilon}}_{\Epsilon}(Z),
\end{gather}
where $T^{\underline{\epsilon}}_n=T_{z_1}^{\epsilon_{1}}\cap T_{z_2}^{\epsilon_{2}}\cap \cdots \cap T_{z_n}^{\epsilon_{n}}$, $\Omega_Z=\frac{\mathrm{d}z_1}{z_1}\wedge\frac{\mathrm{d}z_2}{z_2}\wedge \cdots \wedge\frac{\mathrm{d}z_n}{z_n}$, and \begin{gather*}R^{\underline{\epsilon}}_n=\sum^n_{k=1}(-1)^{{k}\choose{2}}(2\pi\ay)^{k}\delta_{T_{z_1}^{\epsilon_{1}}\cap T_{z_2}^{\epsilon_{2}}\cap \cdots \cap T_{z_{k-1}}^{\epsilon_{k-1}}}\log^{\epsilon_k}{z_k} \tfrac{\mathrm{d}z_{k+1}}{z_{k+1}} \wedge \cdots \wedge \tfrac{\mathrm{d}z_n}{z_n}.\end{gather*} Here $\log^{\epsilon}(z)$ is the branch of $\log$ with argument in $(-\pi-\epsilon,\pi-\epsilon]$. This is a discontinuous function with cut at $T^{\epsilon}_z$, so that ${\rm d}[\log^{\epsilon}(z)]=\frac{{\rm d}z}{z}-2\pi\ay \delta_{T_z^{\epsilon}}$.

The formula \eqref{5.3eq} induces a map of complexes
\begin{gather}\label{map}
\mathcal{R}^{\bullet,\underline{\epsilon}}_{\Epsilon}\colon \ \mathscr{N}^p_{\Epsilon}(X,-\bullet)\rightarrow\mathcal{C}^{2p+\bullet}_{\mathcal{D}}(X,\mathbb{Z}(p)).
\end{gather}
\begin{Proposition}$\mathcal{R}^{\bullet,\underline{\epsilon}}_{\Epsilon}$ is a morphism of complexes.
\end{Proposition}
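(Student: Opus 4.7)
The plan is to verify the identity $D \circ \mathcal{R}^{n,\underline{\epsilon}}_{\Epsilon} = \mathcal{R}^{n-1,\underline{\epsilon}'}_{\Epsilon} \circ \partial_B$ by first checking a universal current identity on $\Box^n$, and then pushing forward along $(\pi_2)_*(\pi_1)^*$. The properness conditions built into the definition of $\mathscr{N}^p_{\Epsilon}(X,n)$ (intersection with $X\times(T^{\epsilon_1}_{z_1}\cap\cdots\cap T^{\epsilon_i}_{z_i}\cap\partial^k\Box^n)$ for all $i$, $k$) are precisely what is needed to pull back the distribution-valued triple to $\tilde{Z}$ and to commute $(\pi_2)_*(\pi_1)^*$ with $d$ and $\partial$. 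So after this preliminary reduction, the content is entirely a computation on $\Box^n$.

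First I would compute, component by component, $D\bigl((2\pi\ay)^n T^{\underline{\epsilon}}_n,\,\Omega_n,\,R^{\underline{\epsilon}}_n\bigr)$. The first slot gives $-\partial T^{\underline{\epsilon}}_n$, which on a normalized cycle only survives on the $z_n=\infty$ facet (all $z_j=0$ boundaries vanish because $\partial^0_j Z=0$, and all $z_i=\infty$ boundaries vanish for $i<n$), and matches $T^{\underline{\epsilon}'}_{n-1}$ with $\underline{\epsilon}' = (\epsilon_1,\ldots,\epsilon_{n-1})$. The second slot is trivially zero since $d\Omega_n=0$. The essential step is the third slot: using the distributional identity
\begin{gather*}
\mathrm{d}\bigl[\log^{\epsilon_k}(z_k)\bigr]=\tfrac{\mathrm{d}z_k}{z_k}-2\pi\ay\,\delta_{T^{\epsilon_k}_{z_k}},
\end{gather*}
together with $\mathrm{d}[\delta_{T^{\underline{\epsilon}}_{k-1}}]=\delta_{\partial T^{\underline{\epsilon}}_{k-1}}$, in the expression for $R^{\underline{\epsilon}}_n$.

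Expanding $\mathrm{d}[R^{\underline{\epsilon}}_n]$ term by term, each $k$-th summand produces two kinds of contributions: a $\tfrac{\mathrm{d}z_k}{z_k}$ piece that combines with the $\delta_{T^{\underline{\epsilon}}_{k-1}}$ prefactor of the $(k-1)$-th summand's residue, and a $\delta_{T^{\epsilon_k}_{z_k}}$ piece that upgrades $\delta_{T^{\underline{\epsilon}}_{k-1}}$ to $\delta_{T^{\underline{\epsilon}}_{k}}$ and feeds the $(k+1)$-th summand. The signs $(-1)^{\binom{k}{2}}(2\pi\ay)^k$ are arranged so that these adjacent contributions telescope; after collecting, what remains is the $k=1$ "top" piece $\Omega_n$, the $k=n$ "bottom" piece $(-1)^{\binom{n}{2}}(2\pi\ay)^n\delta_{T^{\underline{\epsilon}}_n}$ (which after the $(-2\pi\ay)^{p-n}$ prefactor becomes $\delta_{(2\pi\ay)^n T^{\underline{\epsilon}}_n}$ with the correct sign), and the facet contributions from $\partial T^{\underline{\epsilon}}_{k-1}$. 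By normalization, among the facet contributions only the $z_n=\infty$ restriction of the last summand survives, and this is precisely $R^{\underline{\epsilon}'}_{n-1}$ applied to the restriction of $Z$ to that facet.

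Finally I would match signs with the Bloch differential $\partial_B=\sum_i(-1)^i(\rho_i^{0*}-\rho_i^{\infty*})$, which on $\mathscr{N}^p$ reduces to $(-1)^n(-\rho_n^{\infty*})$, and note that the $(-2\pi\ay)^{p-n}$ normalization in \eqref{5.3eq} is precisely what is required to align $\mathcal{R}^{n-1,\underline{\epsilon}'}_{\Epsilon}(\partial_B Z)$ with the residue term produced above. The genuinely subtle point is the last one: the perturbation parameter inherited by $\partial_B Z$ is $\underline{\epsilon}'=(\epsilon_1,\ldots,\epsilon_{n-1})$, and this is \emph{consistent} only because normalization restricts us to a single facet $z_n=\infty$; without normalization, different facets would demand different reindexings of $\underline{\epsilon}$. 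This is the main obstacle, and it is resolved by design: the use of $\mathscr{N}^p_{\Epsilon}$ rather than all of $Z^p_{\Epsilon}$ is exactly what makes the current identity compatible with a single choice of $\underline{\epsilon}$.
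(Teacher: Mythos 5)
Your overall strategy --- reduce to a universal current identity on $\Box^n$ and then use the properness conditions to commute $(\pi_2)_*(\pi_1)^*$ with $\mathrm{d}$ and $\partial$ --- is the right one and is essentially the paper's (which defers to the computation in \cite[equations~(5.2)--(5.4)]{KLM}, reprised in Section~\ref{htpy} as the $0$-cocycle computation for $\mathcal{R}^{\underline{\epsilon}}_{\Box}$). You also correctly identify the key structural point: normalization collapses $\partial Z$ to the $z_n=\infty$ facet, which is what makes a single choice of $\underline{\epsilon}'=(\epsilon_1,\ldots,\epsilon_{n-1})$ consistent.

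There is, however, a genuine error in the middle slot. You assert ``the second slot is trivially zero since $d\Omega_n=0$.'' This is false. The form $\Omega_n=\tfrac{\mathrm{d}z_1}{z_1}\wedge\cdots\wedge\tfrac{\mathrm{d}z_n}{z_n}$ is closed only on the open locus where it is smooth; as a current on $\big(\mathbb{P}^1\big)^n$ it has poles along $\{z_i=0\}\cup\{z_i=\infty\}$, and the distributional exterior derivative $\mathrm{d}[\Omega_n]$ is exactly the residue current supported on those divisors. The paper itself records this in Section~\ref{htpy}, where the second component of $D\mathcal{R}^{\underline{\epsilon}}_n$ is computed to be
\begin{gather*}
-2\pi\ay\sum_{k=1}^n(-1)^k\Omega(z_1,\ldots,\hat{z_k},\ldots,z_n)\,\delta_{(z_k)},
\end{gather*}
which is nonzero. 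This term is essential: it is precisely what matches the $\Omega$-component of $\mathcal{R}^{n-1,\underline{\epsilon}'}_{\Epsilon}(\partial_B Z)$, namely $(\pi_2)_*(\pi_1)^*\Omega_{n-1}$ on the surviving facet. If $\mathrm{d}[\Omega_n]$ were really zero, the left-hand side of your proposed identity would have a vanishing holomorphic-form slot while the right-hand side would not, and the proposition would be false. You seem to have conflated the pointwise $d\Omega_n=0$ with the current-level $\mathrm{d}[\Omega_n]$, even though you treat the analogous residue phenomenon correctly in the $R$-slot via $\mathrm{d}[\log^{\epsilon_k}(z_k)]$.

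A smaller omission: the termwise Leibniz expansion you use for $\mathrm{d}[R^{\underline{\epsilon}}_n]$ is not automatic for arbitrary currents; the paper flags exactly this subtlety and invokes \cite[Appendix~B]{We} to justify it in the exterior-product setting. Your argument should record the same justification.
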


Therefore we get for each $p$, $n$, $\Epsilon$, and $\underline{\epsilon}\in B_{\Epsilon}$ Abel--Jacobi maps (induced by these maps of complexes)
\begin{gather*}{\rm AJ}^{p,n,\underline{\epsilon}}_{\Epsilon}\colon \ H_n\big(\mathscr{N}^p_{\Epsilon}(X,\bullet)\big)\rightarrow H^{2p-n}_{\mathscr{D}}(X,\mathbb{Z}(p)) .\end{gather*}

\section{Homotopies of Abel--Jacobi maps}\label{htpy}

\subsection{Notations}
Put
\begin{gather*}\mathcal{R}_{z_i} := \big(2\pi\ay T_{z_i}, \tfrac{{\rm d}z_i}{z_i}, \log(z_i)\big), \qquad
 \mathcal{R}^{\epsilon}_{z_i} := \big(2\pi\ay T_{\arg(z_i)=\pi-\epsilon}, \tfrac{{\rm d}z_i}{z_i}, \log^\epsilon(z_i)\big),\end{gather*}
where $\log^\epsilon(z_i)$ is taking branch cut at $\arg(z_i)=\pi-\epsilon$. We write $T_{\arg(z_i)=\pi-\epsilon}$ as $T^{\epsilon}_{z_i}$. Define
\begin{gather*}\mathcal{S}^{\epsilon,\epsilon'}_{z_i}:=\big({-}2\pi\ay \theta^{\epsilon,\epsilon'}_{z_i},0,0\big),\end{gather*}
where $\theta^{\epsilon,\epsilon'}_{z_i}:=\pm\delta_{\{- \arg (z_i)\in(\epsilon,\epsilon')\} }$ are 0-currents. (The sign is positive if $\epsilon>\epsilon'$, negative otherwise.) Clearly we have $D\mathcal{S}^{\epsilon,\epsilon'}_{z_i}=\mathcal{R}^{\epsilon}_{z_i}-\mathcal{R}^{\epsilon'}_{z_i}.$

\subsection{Homotopy property}

In this subsection we will prove the

\begin{Theorem}\label{homothm}For any $\underline{\epsilon}, \underline{\epsilon}'\in B^N_{\Epsilon}$, $\mathcal{R}_{\Epsilon}^{\bullet,\underline{\epsilon}}$ and $\mathcal{R}_{\Epsilon}^{\bullet,\underline{\epsilon}'}$ are $($integrally$)$ homotopic morphisms of complexes in degrees~$\bullet<N$.
\end{Theorem}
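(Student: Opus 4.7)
The plan is to construct an explicit chain homotopy $h^{\underline{\epsilon},\underline{\epsilon}'}$ by telescoping along a coordinate-by-coordinate path in $B^N_{\Epsilon}$, replacing one factor $\mathcal{R}^{\epsilon_k}_{z_k}$ at a time by the primitive current $\mathcal{S}^{\epsilon_k,\epsilon'_k}_{z_k}$. The key algebraic input $D\mathcal{S}^{\epsilon,\epsilon'}_{z_i}=\mathcal{R}^{\epsilon}_{z_i}-\mathcal{R}^{\epsilon'}_{z_i}$, combined with the Leibniz rule for the Deligne cup product, should then produce the desired homotopy identity.

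I would first reduce to the monotone case $\epsilon'_i\leq\epsilon_i$ for all $i$ by routing through the coordinatewise minimum $\underline{\epsilon}^{\min}:=(\min(\epsilon_i,\epsilon'_i))_i$, which remains in $B^N_{\Epsilon}$ because $e^{-1/x}$ is increasing and hence
\begin{gather*}
\min(\epsilon_{i+1},\epsilon'_{i+1})<\min\bigl(e^{-1/\epsilon_i},e^{-1/\epsilon'_i}\bigr)=e^{-1/\min(\epsilon_i,\epsilon'_i)}.
\end{gather*}
In the monotone case I would use the path $\underline{\eta}^{(k)}:=(\epsilon_1,\ldots,\epsilon_k,\epsilon'_{k+1},\ldots,\epsilon'_N)$, interpolating $\underline{\eta}^{(N)}=\underline{\epsilon}$ and $\underline{\eta}^{(0)}=\underline{\epsilon}'$. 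The crucial point, exploiting the super-exponentially small tails in the definition of $B_{\Epsilon}$, is that each $\underline{\eta}^{(k)}$ lies in $B^N_{\Epsilon}$: the only nontrivial inequality, $\epsilon'_{k+1}<e^{-1/\epsilon_k}$, follows from $\epsilon'_{k+1}<e^{-1/\epsilon'_k}\leq e^{-1/\epsilon_k}$.

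The homotopy is $h:=\sum_{k=1}^n h_k$ where, schematically (with the same $(-2\pi\ay)^{p-n}(2\pi\ay)^{n-1}$ prefactor as in~\eqref{5.3eq}),
\begin{gather*}
h_k(Z)\sim(\pi_2)_*(\pi_1)^*\bigl(\mathcal{R}^{\epsilon_1}_{z_1}\cup\cdots\cup\mathcal{R}^{\epsilon_{k-1}}_{z_{k-1}}\cup\mathcal{S}^{\epsilon_k,\epsilon'_k}_{z_k}\cup\mathcal{R}^{\epsilon'_{k+1}}_{z_{k+1}}\cup\cdots\cup\mathcal{R}^{\epsilon'_n}_{z_n}\bigr).
\end{gather*}
Its well-definedness requires $Z$ to meet $X\times\bigl(T^{\epsilon_1}_{z_1}\cap\cdots\cap T^{\epsilon_{k-1}}_{z_{k-1}}\cap T^{\eta}_{z_k}\cap T^{\epsilon'_{k+1}}_{z_{k+1}}\cap\cdots\cap T^{\epsilon'_n}_{z_n}\bigr)$ (and its intersections with boundary faces) properly for every $\eta\in[\epsilon'_k,\epsilon_k]$. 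This mixed tuple itself lies in $B^n_{\Epsilon}$ by the identical monotonicity check ($\eta\leq\epsilon_k<e^{-1/\epsilon_{k-1}}$ and $\epsilon'_{k+1}<e^{-1/\epsilon'_k}\leq e^{-1/\eta}$), so the hypothesis $Z\in\mathscr{N}^p_{\Epsilon}$ directly supplies the required properness.

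Finally, I would verify $Dh+h\partial_B=\mathcal{R}_{\Epsilon}^{\bullet,\underline{\epsilon}}-\mathcal{R}_{\Epsilon}^{\bullet,\underline{\epsilon}'}$ by applying the Leibniz rule to each $h_k$: the term where $D$ lands on the $\mathcal{S}$-factor contributes $\mathcal{R}^{\underline{\eta}^{(k)}}-\mathcal{R}^{\underline{\eta}^{(k-1)}}$, so summing over $k$ telescopes to $\mathcal{R}^{\underline{\epsilon}}-\mathcal{R}^{\underline{\epsilon}'}$, while the remaining Leibniz terms reproduce $h(\partial_B Z)$ once one uses that the normalization $\partial^0_j Z=0$ (all $j$) and $\partial^\infty_j Z=0$ ($j<n$) kills every facet contribution except the one retained by the Bloch differential on $\mathscr{N}^p$. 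The truncation at $N$ ensures that we only need finite tuples $\underline{\epsilon}\in B^N_{\Epsilon}$, and the identity closes up in degrees $\bullet<N$ because then both $Z$ and $\partial_B Z$ stay inside the truncated complex. The main obstacle I expect is the monotonicity bookkeeping in step three -- verifying that all the mixed interpolating tuples really lie in $B^n_{\Epsilon}$ so the properness hypothesis applies; once that is secure, the Leibniz verification is essentially formal, modulo careful tracking of the $(-1)^{\binom{k}{2}}$ signs appearing in $R^{\underline{\epsilon}}_n$.
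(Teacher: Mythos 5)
Your approach is essentially the same as the paper's: the homotopy current is a signed sum of cup products $\mathcal{R}^{\epsilon_1}_{z_1}\cup\cdots\cup\mathcal{S}^{\epsilon_k,\epsilon'_k}_{z_k}\cup\cdots\cup\mathcal{R}^{\epsilon'_n}_{z_n}$, the identity is verified by the Leibniz rule and a telescoping argument using $D\mathcal{S}^{\epsilon,\epsilon'}_{z_i}=\mathcal{R}^\epsilon_{z_i}-\mathcal{R}^{\epsilon'}_{z_i}$, and the normalization hypothesis is invoked at the end to discard all facet contributions except the one kept by the Bloch differential. The paper packages the Leibniz/telescoping computation in a double complex $E^{a,b}$ on $\big(\mathbb{P}^1\big)^N$ and proves the formal identity $\mathbb{D}\mathcal{S}^{\underline{\epsilon},\underline{\epsilon}'}_\Box=\mathcal{R}^{\underline{\epsilon}}_\Box-\mathcal{R}^{\underline{\epsilon}'}_\Box$ there (Proposition~\ref{homoprop}), then pulls back to $Z$; you inline the same computation directly on $\mathscr{N}^p_{\Epsilon}(X,\bullet)$. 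The only substantive difference is the bookkeeping style, and you correctly flag the $(-1)^{k-1}$ signs needed in front of each $h_k$.

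The one genuinely valuable thing you add, which the paper glosses over, is the monotonicity reduction through $\underline{\epsilon}^{\min}$. To pull back the homotopy current $\mathcal{S}^{\underline{\epsilon},\underline{\epsilon}'}_n$ to $Z$ one needs $Z$ to meet $X\times\big(T^{\epsilon_1}_{z_1}\cap\cdots\cap T^{\epsilon_{k-1}}_{z_{k-1}}\cap T^{\eta}_{z_k}\cap T^{\epsilon'_{k+1}}_{z_{k+1}}\cap\cdots\big)$ properly, and the membership hypothesis $Z\in\mathscr{N}^p_\Epsilon$ only supplies this when the mixed tuple lies in $B^n_\Epsilon$. Without the coordinatewise ordering $\epsilon'_i\leq\epsilon_i$ the requisite inequalities $\eta<e^{-1/\epsilon_{k-1}}$ and $\epsilon'_{k+1}<e^{-1/\eta}$ are not guaranteed, so your two-leg detour through $\underline{\epsilon}^{\min}\in B^N_\Epsilon$ is exactly the right fix; the paper's proof of Theorem~\ref{homothm} applies Proposition~\ref{homoprop} to $Z$ without recording this verification. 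Your proposal is therefore correct, follows the paper's route, and patches a small but real omission in the paper's exposition.
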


For a fixed $N\in \mathbb{N}$, consider $\big(\mathbb{P}^1\big)^N$ with subsets
\begin{gather*}Y_{I,f}:=\cap_{i\in I}\{z_i=f(i)\}\cong\big(\mathbb{P}^1\big)^{N-|I|},\end{gather*}
where $I$ denotes subsets of $\{1,\dots,N\}$ (with complement $I^c$) and $f\colon I\rightarrow\{0,\infty\}$ ranges over the~$2^{|I|}$ possible functions. Write $\rho^i_{f(i)}$ for the inclusion of $Y_{I \{i\},f|_{I \{i\}}}$ in $Y_{I,f}$, $\operatorname{sgn}_I(i)=|\{i'\in I\,|\,i'\leq i\}|$, $\operatorname{sgn}(0)=0$, and $\operatorname{sgn}(\infty)=1$. Consider the double complex
\begin{gather*}E^{a,b}:=\bigoplus_{\substack{I,f\\ |I|=N-a}}C^{2a+b}_{\mathscr{D}}(Y_{I,f}) ,\end{gather*}
with differentials $\delta\colon E^{a,b}\rightarrow E^{a+1,b}$ and $D\colon E^{a,b}\rightarrow E^{a,b+1}$ given by
\begin{gather*} \delta=2\pi\ay\sum _{i\in I}(-1)^{\operatorname{sgn}_I(i)+\operatorname{sgn}(f(i))}\big(\rho^{i}_{f(i)}\big)_*\end{gather*} and the direct sum of Deligne differentials, respectively. (The $Y_{I,f}$'s in the $a\mathrm{th}$ column are $\cong \big(\mathbb{P}^1\big)^a$, and called ``a-faces''.) Put $\mathbb{D}=D+(-1)^b\delta$ for the differential on the associated simple complex $s^k E^{\bullet,\bullet}:=\bigoplus_{a+b=k}E^{a,b}$.

Fix an $n\in\{0,1,\dots,N\}$. For each subset $J=\{j_1,\dots,j_n\}\subset\{1,\dots,N\}$ and $f\colon J^c\rightarrow\{0,\infty\}$, we have an element
\begin{gather*}\mathcal{R}^{\underline{\epsilon}_J}_n:=((2\pi\ay)^nT^{\underline{\epsilon}_J}_n,\Omega_n,R^{\underline{\epsilon}_J}_n)\in C_{\mathscr{D}}^n(Y_{J^c,f}),\end{gather*}
where $\underline{\epsilon}_J=\{\epsilon_{j_1},\dots,\epsilon_{j_n}\}$. Taken together, these yield an $ {{N}\choose{n} }\cdot2^{N-n}$-tuple $\mathcal{R}^{\underline{\epsilon}}_{\Box,n}:=\big\{\mathcal{R}^{\underline{\epsilon}_{I^c}}_n\big\}_{\substack{I,f\\|I|=N-n}}$ $\in E^{n,-n}$. Define
\begin{gather*}\mathcal{R}^{\underline{\epsilon}}_{\Box}:=\big(\mathcal{R}^{\underline{\epsilon}}_{\Box,0},\dots,\mathcal{R}^{\underline{\epsilon}}_{\Box,n}\big)\in\bigoplus^N_{n=0}E^{n,-n}=s^0E^{\bullet,\bullet}.\end{gather*}

\begin{Proposition}$\mathcal{R}^{\underline{\epsilon}}_{\Box}$ is a $0$-cocycle in the simple complex.
\end{Proposition}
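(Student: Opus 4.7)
The plan is to verify that $\mathbb{D}\mathcal{R}^{\underline{\epsilon}}_{\Box}=0$ component by component. Since $\mathcal{R}^{\underline{\epsilon}}_{\Box,n}\in E^{n,-n}$, unpacking $\mathbb{D}=D+(-1)^b\delta$ shows that the projection of $\mathbb{D}\mathcal{R}^{\underline{\epsilon}}_{\Box}$ onto $E^{n,-n+1}$ is
\begin{gather*}
D\mathcal{R}^{\underline{\epsilon}}_{\Box,n}+(-1)^{n-1}\delta\mathcal{R}^{\underline{\epsilon}}_{\Box,n-1}.
\end{gather*}
So it suffices to show, on each $n$-face $Y_{J^c,f}$ with $|J|=n$, that the $D$-image of $\mathcal{R}^{\underline{\epsilon}_J}_n$ cancels the pull-in from the adjacent $(n-1)$-faces $Y_{J^c\cup\{j\},f\cup\{j\mapsto\mathfrak{f}\}}$, for $j\in J$ and $\mathfrak{f}\in\{0,\infty\}$.

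The first step is to compute
\begin{gather*}
D\mathcal{R}^{\underline{\epsilon}_J}_n=\bigl(-(2\pi\ay)^n\partial T^{\underline{\epsilon}_J}_n,\,-d[\Omega_n],\,d[R^{\underline{\epsilon}_J}_n]-\Omega_n+(2\pi\ay)^n\delta_{T^{\underline{\epsilon}_J}_n}\bigr)
\end{gather*}
entrywise. The geometric boundary $\partial T^{\underline{\epsilon}_J}_n$ decomposes as a signed sum of $T^{\underline{\epsilon}_{J\setminus\{j_i\}}}_{n-1}$-chains supported on the facets $\{z_{j_i}=\mathfrak{f}\}$. The $\Omega$-entry, via the distributional identity $d[\tfrac{dz}{z}]=2\pi\ay(\delta_{z=0}-\delta_{z=\infty})$, produces the corresponding residue forms on those same facets. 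For the $R$-entry, differentiating termwise with $d[\log^{\epsilon_k}z_k]=\tfrac{dz_k}{z_k}-2\pi\ay\delta_{T^{\epsilon_k}_{z_k}}$ and $d[\delta_{T^{\underline{\epsilon}}_{k-1}}]=\delta_{\partial T^{\underline{\epsilon}}_{k-1}}$ produces a telescoping sum whose ``interior'' contributions collapse to $\Omega_n-(2\pi\ay)^n\delta_{T^{\underline{\epsilon}_J}_n}$, cancelling the remaining terms in that slot and leaving only residues supported on the facets $\{z_{j_i}=\mathfrak{f}\}$.

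The second step is to identify each of these facet residues with the appropriate push-forward $(\rho^{j_i}_{\mathfrak{f}})_*\mathcal{R}^{\underline{\epsilon}_{J\setminus\{j_i\}}}_{n-1}$: specializing $z_{j_i}=\mathfrak{f}$ drops the $i$-th factor from both $T^{\underline{\epsilon}_J}_n$ and each summand of $R^{\underline{\epsilon}_J}_n$, exactly reproducing the $(n-1)$-index KLM triple on $Y_{J^c\cup\{j_i\},f\cup\{j_i\mapsto\mathfrak{f}\}}$. This is essentially the same residue analysis that makes the classical $\widetilde{\rm AJ}_{\rm KLM}$ a morphism of complexes in~\cite{KLM}; the $\underline{\epsilon}$-perturbation changes nothing of substance because $d[\log^{\epsilon_k}z_k]$ retains the same form as $d[\log z_k]$, with the cut merely rotated.

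The main obstacle is sign bookkeeping: one must reconcile the $(-1)^{i-1}$ from the $i$-th summand of $\partial T$, the $(-1)^{\binom{k}{2}}$ inside $R^{\underline{\epsilon}}_n$, the $(-1)^{\operatorname{sgn}(\mathfrak{f})}$ from the orientation of $T^{\epsilon_{j_i}}_{z_{j_i}}$, the $(-1)^{\operatorname{sgn}_{J^c\cup\{j\}}(j)}$ from the definition of $\delta$, and the overall $(-1)^{n-1}$ from the total-complex differential. The two natural orderings -- position within $J$ (controlling the KLM formula) and position within $I=J^c\cup\{j\}$ (controlling $\delta$) -- differ by a shift depending on $j$, but after collecting all factors face-by-face the cancellation is forced, yielding $\mathbb{D}\mathcal{R}^{\underline{\epsilon}}_{\Box}=0$.
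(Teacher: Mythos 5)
Your proposal follows essentially the same route as the paper. Both arguments reduce the claim to showing, on each $n$-face $Y_{J^c,f}$, that $D\mathcal{R}^{\underline{\epsilon}_J}_n + (-1)^{n-1}\delta\big(\{\mathcal{R}^{\underline{\epsilon}_{J\setminus\{j\}}}_{n-1}\}\big)=0$, and both verify this by computing the Deligne differential of the KLM triple entrywise and matching the facet residues with the $\delta$-pushforwards of the lower-dimensional triples; the one presentational difference is that the paper simply cites the already-checked formulas \cite[equations~(5.2)--(5.4)]{KLM} (which carry over verbatim since rotating the branch cut leaves the structure of $d[\log^{\epsilon}z]$ unchanged, as you also observe), whereas you sketch that residue computation directly and then defer the final sign reconciliation, at roughly the same level of detail as the original.
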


\begin{proof}According to \cite[equations~(5.2), (5.3), (5.4)]{KLM}, generally we have (for $\mathcal{R}^{\underline{\epsilon}}_{\Box,n} \in (C^n_{\mathscr{D}}(Y_{I,f}))$, without the loss of generality we take $I=\{1,\dots,n\}^c$):
\begin{gather*}
 D\mathcal{R}^{\underline{\epsilon}}_n=\Bigg({-}(2\pi\ay)^n\sum_{k=1}^n(-1)^k\big(\big(\rho^0_k\big)_*T^{\{\epsilon_1,\dots,\hat{\epsilon_k},\dots,\epsilon_n\}}_{n-1}-\big(\rho^{\infty}_k\big)_*
 T^{\{\epsilon_1,\dots,\hat{\epsilon_k},\dots,\epsilon_n\}}_{n-1}\big), \\
\hphantom{D\mathcal{R}^{\underline{\epsilon}}_n=}{}-2\pi\ay\sum_{k=1}^n(-1)^k\Omega(z_1,\dots,\hat{z_k},\dots,z_n)\delta_{(z_k)}, \\
\hphantom{D\mathcal{R}^{\underline{\epsilon}}_n=}{}-2\pi\ay\sum_{k=1}^n(-1)^kR^{\{\epsilon_1,\dots,\hat{\epsilon_k},\dots,\epsilon_n\}}(z_1,\dots,\hat{z_k},\dots,z_n)\delta_{(z_k)}\Bigg) \\
\hphantom{D\mathcal{R}^{\underline{\epsilon}}_n}{}=-(-1)^{n-1}\delta\big(\big\{ \mathcal{R}^{\{\epsilon_1,\dots,\hat{\epsilon_k},\dots,\epsilon_n\}}_{n-1}\big\}_{k=1,\ldots,n;\, f(k)=0,\infty} \big) ,
\end{gather*}
where for the $\delta$ in the last line, we only consider the components mapping into $C^n_{\mathscr{D}}(Y_{I,f})$. This tells us that \begin{gather*}D\mathcal{R}^{\underline{\epsilon}}_n+(-1)^{n-1}\delta\big(\big\{ \mathcal{R}^{\{\epsilon_1,\dots,\hat{\epsilon_k},\dots,\epsilon_n\}}_{n-1}\big\}_{k=1,\ldots,n;\, f(k)=0,\infty} \big) =0\end{gather*} for any $n$; thus each component of $\mathbb{D}\mathcal{R}^{\underline{\epsilon}}_{\Box}$ is 0, and so $\mathcal{R}^{\underline{\epsilon}}_{\Box}\in\operatorname{Ker}(\mathbb{D})$ is a 0-cocycle.
\end{proof}

\begin{Remark}While wedge products and Leibniz formulas for the extension derivative are not generally valid for currents, they are valid in the setting of exterior products (which is what we use here), see \cite[Appendix~B]{We}.
\end{Remark}

For $\underline{\epsilon}$, $\underline{\epsilon'}$, consider the following $(-1)$-cochain in $E^{\bullet,\bullet}$:
\begin{gather*} \mathcal{S}^{\underline{\epsilon},\underline{\epsilon'}}_{\Box}:= \left\{\mathcal{S}^{\hat{\underline{\epsilon}},\hat{\underline{\epsilon'}}}:=\sum^n_{k=1}(-1)^{k-1}\mathcal{R}^{\epsilon_{m_1}}_{z_1}\cup\cdots \cup\mathcal{R}^{\epsilon_{m_{k-1}}}_{z_{k-1}}\cup\mathcal{S}^{\epsilon_{m_k},\epsilon'_{m_k}}_{z_k}\cup\mathcal{R}^{\epsilon'_{m_{k+1}}}_{z_{k+1}}\cup\cdots \cup\mathcal{R}^{\epsilon'_{m_n}}_{z_n}\!\right\}_{n,\underline{m},\underline{i}}\!.\end{gather*}
It satisfies the following key property:
\begin{Proposition} \label{homoprop}$\mathbb{D}\mathcal{S}^{\underline{\epsilon},\underline{\epsilon'}}_{\Box}=\mathcal{R}^{\underline{\epsilon}}_{\Box}-\mathcal{R}^{\underline{\epsilon'}}_{\Box}$.
\end{Proposition}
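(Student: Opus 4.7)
The strategy is to express $\mathcal{S}^{\hat{\underline{\epsilon}},\hat{\underline{\epsilon'}}}$ in a way that makes the Leibniz rule available, then run a telescoping argument whose ``main diagonal'' gives $\mathcal{R}^{\underline{\epsilon}}_n-\mathcal{R}^{\underline{\epsilon'}}_n$ and whose ``cross-residue'' terms match the face differential $\delta$ applied to the $\mathcal{S}$-components living on lower-dimensional faces. The starting observation is that, up to the normalizations fixed in Sections~4.2 and~5.3, the current $\mathcal{R}^{\underline{\epsilon}}_n$ is (iteratively) the cup product $\mathcal{R}^{\epsilon_1}_{z_1}\cup\cdots\cup\mathcal{R}^{\epsilon_n}_{z_n}$: one checks this by induction using $(a,b,c)\cup(A,B,C)=(a\cap A,\,b\wedge B,\,c\wedge B+(-1)^{\deg a}\delta_a\cdot C)$, where $\deg(2\pi\ay T^{\epsilon}_{z_i})=1$. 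In particular the ``interleaved'' summand appearing in $\mathcal{S}^{\hat{\underline{\epsilon}},\hat{\underline{\epsilon'}}}$ is literally what its notation suggests: an $n$-fold cup product with the $k^{\text{th}}$ factor replaced by the degree-$0$ homotopy element $\mathcal{S}^{\epsilon_{m_k},\epsilon'_{m_k}}_{z_k}$.

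Next I would compute $D$ of each telescoping summand using the graded Leibniz rule $D(X\cup Y)=DX\cup Y+(-1)^{\deg X}X\cup DY$, recalling that $\deg\mathcal{R}^{\epsilon}_{z_i}=1$, $\deg\mathcal{S}^{\epsilon,\epsilon'}_{z_i}=0$, and $D\mathcal{S}^{\epsilon,\epsilon'}_{z_i}=\mathcal{R}^{\epsilon}_{z_i}-\mathcal{R}^{\epsilon'}_{z_i}$. Two types of contributions appear. Type~(a): the Leibniz term where $D$ hits the middle factor $\mathcal{S}^{\epsilon_{m_k},\epsilon'_{m_k}}_{z_k}$ picks up a sign $(-1)^{k-1}$ (from the $k-1$ preceding degree-$1$ factors); combined with the outer sign $(-1)^{k-1}$ in the definition of $\mathcal{S}^{\hat{\underline{\epsilon}},\hat{\underline{\epsilon'}}}$, these contributions sum to the discrete telescope
\begin{gather*}
\sum_{k=1}^{n}\Bigl(\mathcal{R}^{\epsilon_1}_{z_1}\cup\cdots\cup\mathcal{R}^{\epsilon_k}_{z_k}\cup\mathcal{R}^{\epsilon'_{k+1}}_{z_{k+1}}\cup\cdots\cup\mathcal{R}^{\epsilon'_n}_{z_n}-\mathcal{R}^{\epsilon_1}_{z_1}\cup\cdots\cup\mathcal{R}^{\epsilon_{k-1}}_{z_{k-1}}\cup\mathcal{R}^{\epsilon'_k}_{z_k}\cup\cdots\cup\mathcal{R}^{\epsilon'_n}_{z_n}\Bigr),
\end{gather*}
which collapses to $\mathcal{R}^{\underline{\epsilon}}_n-\mathcal{R}^{\underline{\epsilon'}}_n$. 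Type~(b): the remaining Leibniz terms where $D$ hits an $\mathcal{R}^{\epsilon}_{z_i}$ (or $\mathcal{R}^{\epsilon'}_{z_i}$) with $i\neq k$ produce the residue $\delta_{(z_i)}=\delta_{z_i^{-1}(0)}-\delta_{z_i^{-1}(\infty)}$ times the remaining factors, and thus live on the facets $\{z_i=0\}$ and $\{z_i=\infty\}$.

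The final step is to show that the Type~(b) terms, after being pushed along the appropriate facet inclusions, exactly cancel $(-1)^b\delta$ applied to the $\mathcal{S}^{\underline{\epsilon},\underline{\epsilon'}}_\Box$-components living on the adjacent $(n-1)$-faces. The key identification is that the restriction of an interleaved cup-product expression to a facet $\{z_i=\mathfrak{f}\}$ is again an interleaved cup-product expression (with $z_i$ deleted and the correct phase depending on whether $i<k$ or $i>k$), so that the facet contributions of $D\mathcal{S}^{\hat{\underline{\epsilon}},\hat{\underline{\epsilon'}}}$ match the $\delta$-image of $\mathcal{S}^{\hat{\underline{\epsilon}}',\hat{\underline{\epsilon'}}'}$ on the lower face, with the signs $(-1)^{\operatorname{sgn}_I(i)+\operatorname{sgn}(\mathfrak{f})}$ in $\delta$ absorbing the Leibniz signs and the residue sign. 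This is formally parallel to the computation already carried out for the preceding Proposition (which established $\mathbb{D}\mathcal{R}^{\underline{\epsilon}}_\Box=0$), and indeed both statements can be viewed as instances of the same formal Leibniz calculation on iterated cup products.

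\textbf{Main obstacle.} The chief difficulty is the sign bookkeeping in the last step: one must juggle four sign conventions simultaneously (the telescoping $(-1)^{k-1}$ in $\mathcal{S}^{\hat{\underline{\epsilon}},\hat{\underline{\epsilon'}}}$, the Leibniz sign $(-1)^{\deg}$, the $(-1)^b$ prefactor of $\delta$ in $\mathbb{D}$, and the face-orientation sign $(-1)^{\operatorname{sgn}_I(i)+\operatorname{sgn}(f(i))}$ in $\delta$) and verify face by face that the interleaved Type~(b) contributions assemble into the correct $\delta$-image. The cleanest way to handle this would be to prove a single ``multilinear Leibniz'' lemma in the double complex $E^{\bullet,\bullet}$, which would then yield both the cocycle statement $\mathbb{D}\mathcal{R}^{\underline{\epsilon}}_\Box=0$ of the previous Proposition and the homotopy identity $\mathbb{D}\mathcal{S}^{\underline{\epsilon},\underline{\epsilon'}}_\Box=\mathcal{R}^{\underline{\epsilon}}_\Box-\mathcal{R}^{\underline{\epsilon'}}_\Box$ as corollaries of one formal computation.
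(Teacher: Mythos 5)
Your proposal follows essentially the same route as the paper's proof: both expand $D\mathcal{S}^{\hat{\underline{\epsilon}},\hat{\underline{\epsilon'}}}$ via the graded Leibniz rule for the chain-level Deligne cup product, observe that the terms where $D$ hits the degree-$0$ factor $\mathcal{S}^{\epsilon_{m_k},\epsilon'_{m_k}}_{z_k}$ telescope (the Leibniz sign $(-1)^{k-1}$ from the $k-1$ preceding degree-$1$ factors cancels the outer $(-1)^{k-1}$), collapsing to $\mathcal{R}^{\underline{\epsilon}}_n-\mathcal{R}^{\underline{\epsilon'}}_n$, and then match the residue terms $-2\pi\ay\Delta_{(z_l)}$ arising from $D\mathcal{R}^{\epsilon}_{z_l}$ against $(-1)^b\delta$ applied to the $\mathcal{S}$-components on the adjacent faces. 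Your framing in terms of iterated cup products and your suggestion of a single multilinear Leibniz lemma unifying this with the preceding cocycle Proposition are sound structural observations, but they do not change the substance of the computation. The sign bookkeeping you flag as the main obstacle is exactly what the paper's proof carries out explicitly (introducing the auxiliary sign $(-1)^{k-1}_l$ to handle whether the residue index $l$ falls before or after $k$), so the proposal is correct and complete at the level of strategy.
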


\begin{proof}On any given $n$-``face'' ($\cong \big(\mathbb{P}^1\big)^n$) we have
\begin{gather*}
 D\mathcal{S}^{\underline{\epsilon},\underline{\epsilon'}}_n=\sum_{k=1}^n\sum_{l=1}^{k-1}(-1)^{l-1}(-1)^{k-1}\mathcal{R}^{\epsilon_1}_{z_1}\cup\cdots\cup D\mathcal{R}^{\epsilon_l}_{z_l}\cup\cdots\cup\mathcal{S}^{\epsilon_{k},\epsilon'_{k}}_{z_k}\cup\cdots\cup\mathcal{R}^{\epsilon'_n}_{z_n} \\
\hphantom{D\mathcal{S}^{\underline{\epsilon},\underline{\epsilon'}}_n=}{}+\sum_{k=1}^n\sum_{l=k+1}^n(-1)^{l}(-1)^{k-1}\mathcal{R}^{\epsilon_1}_{z_1}\cup\cdots\cup \mathcal{S}^{\epsilon_{k},\epsilon'_{k}}_{z_k}\cup\cdots\cup D\mathcal{R}^{\epsilon'_l}_{z_l}\cup\cdots\cup\mathcal{R}^{\epsilon'_n}_{z_n}\\
\hphantom{D\mathcal{S}^{\underline{\epsilon},\underline{\epsilon'}}_n=}{} +\sum_{k=1}^n\mathcal{R}^{\epsilon_1}_{z_1}\cup\cdots\cup D\mathcal{S}^{\epsilon_k,\epsilon'_k}_{z_k}\cup\cdots\cup\mathcal{R}^{\epsilon'_n}_{z_n}.
\end{gather*}

Noting that $D\mathcal{R}^{\epsilon}_{z_i}=-2\pi\ay(\delta_{(z_i)},\delta_{(z_i)},0)=:-2\pi\ay \Delta_{(z_i)}$ (which commutes with other triples) and $D\mathcal{S}^{\epsilon,\epsilon'}_{z_i}=\mathcal{R}^{\epsilon}_{z_i}-\mathcal{R}^{\epsilon'}_{z_i}$, we can rewrite this expression by applying the telescoping method and rearranging the order of the summation. Denoting $(-1)^{k-1}_l := \begin{cases} (-1)^{k-1}, & l>k, \\ (-1)^{k-2}, & l<k, \end{cases}$
\begin{gather*}
 D\mathcal{S}^{\underline{\epsilon},\underline{\epsilon'}}_n=2\pi\ay\sum_{k=1}^n\sum_{l=1,\, l\neq k}^n(-1)^{l}(-1)^{k-1}_l\Delta_{(z_l)}\mathcal{R}^{\epsilon_1}_{z_1}\cup\cdots\cup\mathcal{S}^{\epsilon_{k},\epsilon'_{k}}_{z_k}\cup\cdots\cup\mathcal{R}^{\epsilon'_n}_{z_n} \\
\hphantom{D\mathcal{S}^{\underline{\epsilon},\underline{\epsilon'}}_n=}{} \text{(with the } l^{\rm th}\text{ term omitted, either before } k \text{ or after } k\text{)} \\
\hphantom{D\mathcal{S}^{\underline{\epsilon},\underline{\epsilon'}}_n=}{} +\sum_{k=1}^n\mathcal{R}^{\epsilon_1}_{z_1}\cup\cdots\cup \big(\mathcal{R}^{\epsilon_k}_{z_k}-\mathcal{R}^{\epsilon'_k}_{z_k}\big)\cup\cdots\cup\mathcal{R}^{\epsilon'_n}_{z_n} \\
\hphantom{D\mathcal{S}^{\underline{\epsilon},\underline{\epsilon'}}_n}{} =2\pi\ay\sum_{l=1}^n(-1)^{l}\Delta_{(z_l)}\sum_{k=1,\, k\neq l}^n(-1)^{k-1}_l\mathcal{R}^{\epsilon_1}_{z_1}\cup\cdots\cup\mathcal{S}^{\epsilon_{k},\epsilon'_{k}}_{z_k}\cup\cdots\cup\mathcal{R}^{\epsilon'_n}_{z_n}
 +\mathcal{R}^{\underline{\epsilon}}_n-\mathcal{R}^{\underline{\epsilon'}}_n \\
\hphantom{D\mathcal{S}^{\underline{\epsilon},\underline{\epsilon'}}_n}{} =-(-1)^n\delta\big( \big\{\mathcal{S}^{\{\epsilon_1,\dots,\hat{\epsilon_l},\dots,\epsilon_n\},\{\epsilon'_1,\dots,\hat{\epsilon'_l},\dots,\epsilon'_n\}}_{n-1} \big\}_{l=1,\ldots,n; f(l)=0,\infty} \big)+\mathcal{R}^{\underline{\epsilon}}_n-\mathcal{R}^{\underline{\epsilon'}}_n.
\end{gather*}

This tells us
\begin{gather}\label{eq64}
D\mathcal{S}^{\underline{\epsilon},\underline{\epsilon'}}_n+(-1)^n\delta\big( \big\{\mathcal{S}^{\{\epsilon_1,\dots,\hat{\epsilon_l},\dots,\epsilon_n\},\{\epsilon'_1,\dots,\hat{\epsilon'_l},\dots,\epsilon'_n\}}_{n-1} \big\}_{l=1,\ldots,n; f(l)=0,\infty} \big)=\mathcal{R}^{\underline{\epsilon}}_n-\mathcal{R}^{\underline{\epsilon'}}_n
\end{gather}
holds on each $n$-face of $\big(\mathbb{P}^1\big)^n$. Thus $\mathbb{D}\mathcal{S}^{\underline{\epsilon},\underline{\epsilon'}}_{\Box}=\mathcal{R}^{\underline{\epsilon}}_{\Box}-\mathcal{R}^{\underline{\epsilon'}}_{\Box}$ holds.
\end{proof}

\begin{proof}[Proof of Theorem~\ref{homothm}]For $Z\in\mathscr{N}^p_{\epsilon}(X,n)$, $\partial Z$ only lives in $X\times\partial^{\infty}_n\Box^n$ by definition. Thus in \eqref{eq64}, the only relevant term in the braces is $\mathcal{S}_{n-1}^{\{\epsilon_1,\ldots,\epsilon_{n-1}\},\{\epsilon_1 ',\ldots,\epsilon_{n-1} '\}}$. Therefore Proposition~\ref{homoprop} implies at once that $\mathcal{R}^{n,\underline{\epsilon}}_{\varepsilon} (Z)- \mathcal{R}^{n,\underline{\epsilon '}}_{\varepsilon}(Z) = D \mathcal{S}^{\underline{\epsilon},\underline{\epsilon'}}_{\Epsilon}(Z) + \mathcal{S}^{\underline{\epsilon},\underline{\epsilon'}}_{\Epsilon}(\partial Z)$, so that
$\mathcal{R}_{\Epsilon}^{\bullet,\underline{\epsilon}}\sim \mathcal{R}_{\Epsilon}^{\bullet,\underline{\epsilon}'}$ as clai\-med.
\end{proof}

\section{The integral Abel--Jacobi map}\label{sec-zaj}

Recall our map of complexes from $\eqref{map}$, with $n^{\text{th}}$ term
\begin{gather*}\mathcal{R}^{n,\underline{\epsilon}}_{\Epsilon}\colon \ \mathscr{N}^p_{\Epsilon}(X,n)\rightarrow C^{2p-n}_{\mathscr{D}}(X,\mathbb{Z}(p)).\end{gather*}

According to our result from the last section, we know that for $\underline{\epsilon},\underline{\epsilon}'\in B^N_{\Epsilon}$, $\mathcal{R}^{n,\underline{\epsilon}}_{\Epsilon}\sim\mathcal{R}^{n,\underline{\epsilon}'}_{\Epsilon}$; that is to say, they induce the same homomorphism after taking cohomology:
\begin{Corollary}
All the $\underline{\epsilon}\in B_{\Epsilon}$ induce the same map:
\begin{gather*}{\rm AJ}_{\Epsilon}^{p,n}\colon \ H_n\big(\mathscr{N}^p_{\Epsilon}(X,\bullet)\big)\rightarrow H^{2p-n}_{\mathscr{D}}(X,\mathbb{Z}(p)) .\end{gather*}
\end{Corollary}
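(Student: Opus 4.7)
The plan is to deduce the corollary as an immediate consequence of Theorem~\ref{homothm} via the elementary principle that integrally homotopic morphisms of chain complexes induce the same map on (co)homology. That is, if $f,g\colon A^\bullet\to B^\bullet$ satisfy $f-g = Dh + hd$ for some graded map $h$, then for any cycle $z$ one has $f(z)-g(z) = D(h(z))$, a coboundary, so $[f(z)]=[g(z)]$ in cohomology.

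Concretely, fix the target degree $n$ and take $\underline{\epsilon},\underline{\epsilon}'\in B_{\Epsilon}$; these are infinite sequences satisfying~\eqref{be}. Choose any integer $N>n$ and let $\underline{\epsilon}^{(N)}$, $\underline{\epsilon}'^{(N)}$ denote the truncations to the first $N$ entries, which by construction lie in $B^N_{\Epsilon}$. Theorem~\ref{homothm} (together with its proof, which ends with the identity $\mathcal{R}^{n,\underline{\epsilon}}_{\Epsilon}(Z)-\mathcal{R}^{n,\underline{\epsilon}'}_{\Epsilon}(Z) = D\mathcal{S}^{\underline{\epsilon},\underline{\epsilon}'}_{\Epsilon}(Z)+\mathcal{S}^{\underline{\epsilon},\underline{\epsilon}'}_{\Epsilon}(\partial Z)$) then supplies an integral homotopy between $\mathcal{R}^{\bullet,\underline{\epsilon}}_{\Epsilon}$ and $\mathcal{R}^{\bullet,\underline{\epsilon}'}_{\Epsilon}$ in all degrees $\bullet<N$, in particular at degree $n$. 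For a cycle $Z\in\mathscr{N}^p_{\Epsilon}(X,n)$ representing a class in $H_n$, the relation $\partial Z = 0$ reduces this to $\mathcal{R}^{n,\underline{\epsilon}}_{\Epsilon}(Z)-\mathcal{R}^{n,\underline{\epsilon}'}_{\Epsilon}(Z) = D\mathcal{S}^{\underline{\epsilon},\underline{\epsilon}'}_{\Epsilon}(Z)$, so the two images coincide in $H^{2p-n}_{\mathscr{D}}(X,\mathbb{Z}(p))$. Hence the induced map on homology is independent of the choice of $\underline{\epsilon}$ and may be denoted ${\rm AJ}^{p,n}_{\Epsilon}$.

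There is effectively no obstacle here: the genuine work was already carried out in Proposition~\ref{homoprop} and Theorem~\ref{homothm}, which produced the explicit cochain $\mathcal{S}^{\underline{\epsilon},\underline{\epsilon}'}_{\Box}$ giving the integral homotopy. The only point meriting a line of commentary is the truncation step, which is harmless because elements of $B_{\Epsilon}$ are infinite sequences whose initial $N$-segments automatically lie in $B^N_{\Epsilon}$, so any given homology degree $n$ can be accommodated by taking $N>n$.
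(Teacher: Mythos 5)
Your proposal is correct and follows the same route as the paper: the Corollary is an immediate consequence of Theorem~\ref{homothm} together with the standard fact that chain-homotopic morphisms of complexes induce identical maps on (co)homology. The extra care you take in truncating the infinite sequences in $B_{\Epsilon}$ to $N$-tuples in $B^N_{\Epsilon}$ with $N>n$ is implicit in the paper's one-line argument and is a harmless but welcome clarification.
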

Moreover, for $\Epsilon'<\Epsilon$ and $\underline{\epsilon}\in B_{\Epsilon'}\subset B_{\Epsilon}$, the following diagram commutes:
\begin{gather*}
 \xymatrix{\mathscr{N}^p_{\Epsilon}(X,\bullet)\ar@{^{(}->}[rr]^{\imath}\ar[rd]_{\mathcal{R}^{\bullet,\underline{\epsilon}}_{\Epsilon}}&&\mathscr{N}^p_{\Epsilon'}(X,\bullet)\ar[ld]^{\mathcal{R}^{\bullet,\underline{\epsilon}}_{\Epsilon'}}\\
 &C^{2p-\bullet}_{\mathscr{D}}(X,\mathbb{Z}(p)),& }
\end{gather*}
which is straightforward from the definition. By taking homology, we have that the following diagram commutes as well:
\begin{gather*}
 \xymatrix{ H_n\big(\mathscr{N}^p_{\Epsilon}(X,\bullet)\big)\ar[rr]^{[\imath]}\ar[rd]_{{\rm AJ}^{p,n}_{\Epsilon}}&&H_n\big(\mathscr{N}^p_{\Epsilon'}(X,\bullet)\big)\ar[ld]^{{\rm AJ}^{p,n}_{\Epsilon'}}\\
 &H^{2p-n}_{\mathscr{D}}(X,\mathbb{Z}(p)).& }
\end{gather*}
In order to get the integral Abel--Jacobi map, we need the following result:
\begin{Theorem}
${\rm CH}^p(X,n)\cong \lim\limits_{\overrightarrow{\Epsilon}} H_n\big(\mathscr{N}^p_{\Epsilon}(X,\bullet)\big)$.
\end{Theorem}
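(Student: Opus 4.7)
The plan is to chain together the two structural results already established earlier in the paper: the integral quasi-isomorphism $\mathscr{N}^p(X,\bullet)\hookrightarrow Z^p(X,\bullet)$ from Section~\ref{nor}, and the identification $\bigcup_{\Epsilon>0}\mathscr{N}^p_{\Epsilon}(X,\bullet)=\mathscr{N}^p(X,\bullet)$ from Section~\ref{S3}. Combined with the general fact that filtered colimits of abelian groups commute with taking homology, these yield the isomorphism almost formally.

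First I would check that $\{\mathscr{N}^p_{\Epsilon}(X,\bullet)\}_{\Epsilon>0}$ forms a filtered direct system of subcomplexes of $\mathscr{N}^p(X,\bullet)$. For $\Epsilon'<\Epsilon$ one has $B^n_{\Epsilon'}\subset B^n_{\Epsilon}$ directly from the inequalities~\eqref{be}, so any cycle whose intersections with $X\times(T^{\epsilon_1}_{z_1}\cap\cdots\cap T^{\epsilon_i}_{z_i})$ and the analogous boundary strata are proper for every $\underline{\epsilon}\in B^n_{\Epsilon}$ enjoys the same property for every $\underline{\epsilon}\in B^n_{\Epsilon'}$. This gives an honest inclusion $\mathscr{N}^p_{\Epsilon}(X,\bullet)\hookrightarrow\mathscr{N}^p_{\Epsilon'}(X,\bullet)$ of subcomplexes, compatible with the Bloch differential. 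Since $\mathscr{N}^p_{\min(\Epsilon_1,\Epsilon_2)}(X,\bullet)$ contains both $\mathscr{N}^p_{\Epsilon_i}(X,\bullet)$, the system is directed, and the colimit along these inclusions is literally the set-theoretic union, which by the theorem of Section~\ref{S3} equals $\mathscr{N}^p(X,\bullet)$.

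Next I would pass to homology. The exactness of filtered colimits in the category of abelian groups (axiom AB5) implies that $H_n$ commutes with $\varinjlim$, so
\begin{gather*}
H_n\bigl(\mathscr{N}^p(X,\bullet)\bigr)
 = H_n\Bigl(\varinjlim_{\Epsilon\to 0}\mathscr{N}^p_{\Epsilon}(X,\bullet)\Bigr)
 = \varinjlim_{\Epsilon\to 0} H_n\bigl(\mathscr{N}^p_{\Epsilon}(X,\bullet)\bigr).
\end{gather*}
The quasi-isomorphism $\mathscr{N}^p(X,\bullet)\hookrightarrow Z^p(X,\bullet)$ identifies the left-hand side with $H_n(Z^p(X,\bullet))={\rm CH}^p(X,n)$, yielding the claimed isomorphism. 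One should also note that the composite isomorphism has a concrete description: given a cycle $Z\in Z^p(X,n)$, apply the explicit retraction $\phi$ from Section~\ref{nor} to land in $\mathscr{N}^p(X,n)$, and then invoke the theorem of Section~\ref{S3} to find an $\Epsilon>0$ with $\phi(Z)\in\mathscr{N}^p_{\Epsilon}(X,n)$; the resulting class in $\varinjlim_{\Epsilon} H_n(\mathscr{N}^p_{\Epsilon}(X,\bullet))$ is independent of the choices made.

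The only point requiring any genuine care is the commutation of $\varinjlim$ with $H_n$, which rests on exactness of filtered colimits of abelian groups. Since every structure map in our system is a strict inclusion of subcomplexes and the Bloch differential restricts strictly, there are no coherence issues and the argument reduces to a routine application of the general principle; no deeper geometric input is needed beyond the two theorems cited above.
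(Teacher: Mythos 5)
Your argument is correct and follows essentially the same route as the paper: both proofs rest on the union statement $\bigcup_{\Epsilon}\mathscr{N}^p_{\Epsilon}=\mathscr{N}^p$ together with the integral quasi-isomorphism $\mathscr{N}^p\hookrightarrow Z^p$. The only cosmetic difference is that you package the commutation of homology with the filtered colimit as a single appeal to exactness of filtered colimits in $\mathrm{Ab}$, whereas the paper verifies surjectivity (from the union statement) and injectivity (bounding a witness $W\in\mathscr{N}^p(X,n+1)$ into some $\mathscr{N}^p_{\Epsilon}(X,n+1)$) by hand; your formulation is, if anything, slightly cleaner and makes the injectivity step more transparent.
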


\begin{proof}Since $\mathscr{N}^p_{\Epsilon}(X,\bullet)\subset\mathscr{N}^p(X,\bullet)$, we have $H_n\big(\mathscr{N}^p_{\Epsilon}(X,\bullet)\big)$ maps to $H_n(\mathscr{N}^P(X,\bullet))=$ \linebreak ${\rm CH}^P(X,n)$ for every $\Epsilon$, hence there exists a natural map $\lim\limits_{\overrightarrow{\Epsilon}}H_n\big(\mathscr{N}^p_{\Epsilon}(X,\bullet)\big)\rightarrow {\rm CH}^p(X,n)$. Since $\bigcup\mathscr{N}^p_{\Epsilon}(X,\bullet)=\mathscr{N}^p(X,\bullet)$, this map is surjective. To show it is injective, consider $\xi\in {\rm CH}^p(X,n)$, and $\tilde{\xi}$, $\tilde{\xi}'$ be two representations of $\xi$ in the following sequence:
\begin{gather*}H_n\big(\mathscr{N}^p_{\Epsilon}(X,\bullet)\big)\rightarrow H_n\big(\mathscr{N}^p_{\Epsilon'}(X,\bullet)\big)\rightarrow\cdots\rightarrow {\rm CH}^p(X,n).\end{gather*}
We need to show that $\tilde{\xi}$ and $\tilde{\xi}'$ will eventually merge at some $\epsilon$, that is to say, $\bigcup\partial\mathscr{N}^p_{\Epsilon}(X,n+1)=\partial\mathscr{N}^p(X,n+1)$, which directly comes from the property of normalized cycle and $\bigcup\mathscr{N}^p_{\Epsilon}(X,\bullet)=\mathscr{N}^p(X,\bullet)$.
\end{proof}

Thus we have a well-defined map
\begin{gather*}{\rm AJ}^{p,n}_{\mathbb{Z}}\colon \ {\rm CH}^p(X,n)\rightarrow H^{2p-n}_\mathscr{D}(X,\mathbb{Z}(p))\end{gather*}
given by ${\rm AJ}^{p,n}_{\mathbb{Z}}:=\lim\limits_{\overrightarrow{\Epsilon}}{\rm AJ}^{p,n}_{\Epsilon}$. Precisely, for $Z\in {\rm CH}^p(X,n)$, if $\tilde{Z}\in\operatorname{Ker}(\partial)\subset\mathscr{N}^p_{\Epsilon}(X,n)$ is any choice of class mapping to $Z$ and $\underline{\epsilon}$ any choice of element of $B_{\varepsilon}$, ${\rm AJ}^{p,n}_{\mathbb{Z}}(Z):=\mathcal{R}^{n,\underline{\epsilon}}_{\Epsilon}\big(\tilde{Z}\big)$ is independent of the choices. Thus we have an explicit expression for the integral Abel--Jacobi map:
\begin{gather*}{\rm AJ}^{p,n}_{\mathbb{Z}}(Z)=\lim_{\underline{\epsilon}\rightarrow\underline{0}}\mathcal{R}^{n,\underline{\epsilon}}_{\Epsilon}\big(\tilde{Z}\big).\end{gather*}
Moreover, for $\tilde{Z}$ a representative in $ Z^p_{\mathbb{R}}(X,n)\cap\mathscr{N}^p(X,n)$, we know that $\tilde{Z}$ lies in $\mathscr{N}^p_{\Epsilon}$ for some $\Epsilon>0$, and \begin{gather*}\lim_{\underline{\epsilon}\rightarrow\underline{0}}\mathcal{R}^{n,\underline{\epsilon}}_{\Epsilon}\big(\tilde{Z}\big)=\mathcal{R}\big(\tilde{Z}\big).\end{gather*}
since we have the same map of the level of cohomology for every~$\Epsilon$. In particular, this means that \emph{on cycles belonging to} $Z^p_{\mathbb{R}}(X,n)\cap\mathscr{N}^p(X,n)$, \emph{our integral AJ map is given by the KLM formula.}

\section{Application to torsion cycles}\label{sec-tors}

Recent work of Kerr and Yang \cite{MY} provides explicit representatives for generators of ${\rm CH}^n(\operatorname{Spec}(k)$, $2n-1)$ where $k$ is an abelian extension of $\mathbb{Q}$. We'll check that when $n=2,3$, the cycle given by~\cite{MY} satisfies the normal and proper intersection condition thus belongs to $Z^p_{\mathbb{R}}(X,2p-1)\cap\mathscr{N}^p(X,2p-1)$. For $n=4$, this is taken up in~\cite{KLi}; while for $n\geq5$ finding a normalized generator is a future task.

Let $\xi_N$ be an $N^{\text{th}}$ root of 1.

\begin{Proposition}\label{81} The cycles given in {\rm \cite[equations~(4.1) and~(4.2)]{MY}} lie in $Z^n_\mathbb{R}(\mathbb{Q}(\xi_N),2n-1)\cap\mathscr{N}^n(\mathbb{Q}(\xi_N),2n-1)$.
\end{Proposition}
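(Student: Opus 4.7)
The plan is to verify the two defining conditions directly on the explicit parametrizations given in \cite{MY}, equations~(4.1) and~(4.2), for $n=2,3$. The first condition, membership in $\mathscr{N}^n(\mathbb{Q}(\xi_N),2n-1)$, requires that the face-restrictions $(\rho_i^{\infty})^*Z$ vanish for $i<2n-1$ and that $(\rho_j^0)^*Z$ vanish for every $j$. Since each coordinate function $z_r$ in the Kerr--Yang cycle is a rational function built from cyclotomic factors of the form $(1-\xi_N^a t)$, its zero and pole loci in the parameter space can be read off directly, and each face-restriction is checked to be either empty or degenerate (i.e., pulled back from a lower-dimensional cube), hence zero in $\mathscr{N}^n$.

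The second condition -- proper intersection with $X\times (T_{z_1}\cap\cdots\cap T_{z_i})$ and with its further intersection with each codimension-$k$ subface $\partial^k\Box^{2n-1}$ -- is a real-codimension count. Writing the parameter space as a complex analytic variety of complex dimension $2n-2$, I would verify that the conditions $\arg(z_r(\underline{t}))=\pi$ for $r=1,\dots,i$ cut the analytic cycle in real codimension exactly $i$. Functional independence of the $\arg(z_r)$'s on the parameter space follows from the fact that the distinct cyclotomic factors appearing in the $z_r$'s are $\mathbb{R}$-linearly independent as phase functions, so no linear combination of the argument equations collapses. The facet version amounts to running the same count after specializing some parameters to the values where the remaining $z_r$'s go to $0$ or $\infty$, which reduces to a rank computation on the restricted rational functions.

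The main obstacle will be the combinatorial bookkeeping of the facet restrictions in the $n=3$ case, where one has $2n-1=5$ coordinate directions and thus a sizable collection of subfaces $\partial^k\Box^5$ to examine. I expect to exploit the Galois/cyclic symmetry of the Kerr--Yang cycle (which permutes the $\xi_N^a$-factors in a controlled way) to collapse this case analysis to a handful of representative facets, after which an explicit dimension count on each suffices. The $n=2$ case should follow the same pattern as the Petras example $Z_1$ displayed in Section~\ref{S1} and will serve as a template for the more involved $n=3$ verification; the $n=4$ case is explicitly deferred to \cite{KLi}, which suggests that the direct verification genuinely becomes delicate beyond $n=3$.
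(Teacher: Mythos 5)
Your approach is essentially the same as the paper's: both conditions are checked directly from the explicit parametrizations in \cite{MY}. The difference is one of presentation rather than method: the paper does not actually carry out the verification in situ, but simply cites \cite[Remark~3.3]{MY} for the proper-intersection ($Z^n_{\mathbb{R}}$) membership and the boundary computations in \cite[Sections~4.1 and~4.2]{MY} for the normalization ($\mathscr{N}^n$) membership. So the content you propose to work out by hand is precisely the content the paper delegates to the Kerr--Yang source.

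Two cautions on your sketch. First, the ``functional independence of the $\arg(z_r)$'' argument is a plausibility heuristic, not a proof: the counterexample in Proposition~\ref{counterex} shows that coordinate functions built from simple rational factors can conspire so that $\arg(z_1)=\arg(z_2)=\arg(z_3)=\pi-\epsilon$ has excess solutions, so you would need to exhibit the actual rank computation on the specific Kerr--Yang functions rather than appeal to ``distinct cyclotomic factors.'' Second, note that the surrounding text of the paper restricts the claim to $n=2,3$ (deferring $n=4$ to \cite{KLi} and leaving $n\geq 5$ open), and for those small cases the bookkeeping you worry about is manageable precisely because the cycles are curves and surfaces in $\Box^3$ and $\Box^5$ respectively; your plan to exploit the cyclic symmetry to collapse the subface analysis is sound and is in the spirit of how \cite{MY} organizes the computation.
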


The $Z^n_\mathbb{R}$ part is given in \cite[Remark~3.3]{MY}. The $\mathscr{N}^n$ part is visible from the boundary computations in \cite[Sections~4.1 and~4.2]{MY}. See \cite[Section~5]{KLi} for torsion calculations arising from Proposition~\ref{81}.

This puts some earlier results on firm ground as well, such as O.~Petras's result in~\cite{Pe} that
\begin{gather*}Z:=\big(1-1/t,1-t,t^{-1}\big)+\big(1-\xi_5/t,1-t,t^{-5}\big)+\big(1-\bar{\xi_5}/t,1-t,t^{-5}\big)\end{gather*}
generates ${\rm CH}^2\big(\mathbb{Q}\big(\sqrt{5},3\big)\big)$ and (since we have $\mathscr{R}(Z)=\operatorname{Li}_2(1)+5(\operatorname{Li}_2(\xi_5)+\operatorname{Li}_2(\bar{\xi_5}))=7\pi^2/30$) is 120-torsion.

\subsection*{Acknowledgements}

This work was supported by the National Science Foundation [DMS-1361147; PI: Matt Kerr]. The author would like to thank his advisor Matt Kerr for great help and discussions, J.~McCarthy for graciously supplying the counterexample in Section~\ref{simp}, J.~Lewis for his interest in this work, and the referee for their great help on improving the exposition.

\pdfbookmark[1]{References}{ref}
\LastPageEnding


\begin{thebibliography}{99}
\footnotesize\itemsep=0pt

\bibitem{Bl0}
Bloch S., Algebraic cycles and higher {$K$}-theory, \href{https://doi.org/10.1016/0001-8708(86)90081-2}{\textit{Adv. Math.}}
 \textbf{61} (1986), 267--304.

\bibitem{Bl0a}
Bloch S., Algebraic cycles and the {B}e\u{\i}linson conjectures, in The
 {L}efschetz Centennial Conference, {P}art {I} ({M}exico {C}ity, 1984),
 \href{https://doi.org/10.1090/conm/058.1/860404}{\textit{Contemp. Math.}}, Vol.~58, Amer. Math. Soc., Providence, RI, 1986,
 65--79.

\bibitem{Bl1}
Bloch S., Some notes on elementary properties of higher chow groups, including
 functoriality properties and cubical chow groups, {P}reprint, available at
 \url{http://www.math.uchicago.edu/~bloch/publications.html}.

\bibitem{KLM}
Kerr M., Lewis J.D., M\"{u}ller-Stach S., The {A}bel--{J}acobi map for higher
 {C}how groups, \href{https://doi.org/10.1112/S0010437X05001867}{\textit{Compos. Math.}} \textbf{142} (2006), 374--396,
 \href{https://arxiv.org/abs/math.AG/0409116}{math.AG/0409116}.

\bibitem{KL}
Kerr M., Lewis J.D., The {A}bel--{J}acobi map for higher {C}how groups.~{II},
 \href{https://doi.org/10.1007/s00222-007-0066-x}{\textit{Invent. Math.}} \textbf{170} (2007), 355--420,
 \href{https://arxiv.org/abs/math.AG/0611333}{math.AG/0611333}.

\bibitem{KLi}
Kerr M., Li M., Two applications of the integral regulator,
 \href{https://arxiv.org/abs/1809.04114}{arXiv:1809.04114}.

\bibitem{MY}
Kerr M., Yang Y., An explicit basis for the rational higher {C}how groups of
 abelian number fields, \href{https://doi.org/10.2140/akt.2018.3.173}{\textit{Ann. K-Theory}} \textbf{3} (2018), 173--191,
 \href{https://arxiv.org/abs/1608.07477}{arXiv:1608.07477}.

\bibitem{LR}
Lion J.-M., Rolin J.-P., Th\'{e}or\`eme de pr\'{e}paration pour les fonctions
 logarithmico-exponentielles, \href{https://doi.org/10.5802/aif.1583}{\textit{Ann. Inst. Fourier (Grenoble)}}
 \textbf{47} (1997), 859--884.

\bibitem{No}
Nowak K.J., Flat morphisms between regular varieties, \textit{Univ. Iagel. Acta
 Math.} \textbf{35} (1997), 243--246.

\bibitem{Pe}
Petras O., Functional equations of the dilogarithm in motivic cohomology,
 \href{https://doi.org/10.1016/j.jnt.2009.04.009}{\textit{J.~Number Theory}} \textbf{129} (2009), 2346--2368,
 \href{https://arxiv.org/abs/0712.3987}{arXiv:0712.3987}.

\bibitem{We}
Wei{\ss}schuh T., A commutative regulator map into {D}eligne--{B}eilinson
 cohomology, \href{https://doi.org/10.1007/s00229-016-0867-6}{\textit{Manuscripta Math.}} \textbf{152} (2017), 281--315,
 \href{https://arxiv.org/abs/1410.4686}{arXiv:1410.4686}.

\end{thebibliography}
\end{document}